\DeclareMathAlphabet{\mathpzc}{OT1}{pzc}{m}{it}
\numberwithin{equation}{section}
\def\eqnarray{\stepcounter{equation}\let\@currentlabel=\theequation
\global\@eqnswtrue
\tabskip\@centering\let\\=\@eqncr
$$\halign to \displaywidth\bgroup\hfil\global\@eqcnt\z@
  $\displaystyle\tabskip\z@{##}$&\global\@eqcnt\@ne
  \hfil$\displaystyle{{}##{}}$\hfil
  &\global\@eqcnt\tw@ $\displaystyle{##}$\hfil
  \tabskip\@centering&\llap{##}\tabskip\z@\cr}
\def\endeqnarray{\@@eqncr\egroup
      \global\advance\c@equation\m@ne$$\global\@ignoretrue}
\newtheorem{theorem}{Theorem}[section]
\newtheorem{definition}[theorem]{Def{}inition}
\newtheorem{example}[theorem]{Example}
\newtheorem{lemma}[theorem]{Lemma}
\newtheorem{proposition}[theorem]{Proposition}
\newtheorem{remark}[theorem]{Remark}
\numberwithin{equation}{section}
\newcommand{\R}{\mathbb{R}}
\DeclareMathOperator{\diag}{diag}
\title{An Optimal Time Variable Learning Framework for Deep Neural Networks}
\date{\today}
\thanks{
This work is partially supported by NSF grants DMS-2110263, DMS-1913004, DMS-2111315, the Air Force Office of Scientific Research (AFOSR) under Award NO: FA9550-19-1-0036, and Department of Navy, Naval PostGraduate School under Award NO: N00244-20-1-0005.
}
\author{Harbir Antil}
\address{H. Antil. The Center for Mathematics and Artificial Intelligence
(CMAI) and Department of Mathematical Sciences, George Mason University,
Fairfax, VA 22030, USA.}
\email{hantil@gmu.edu}
\author{Hugo D\'iaz}
\address{Hugo D\'iaz. Department of Mathematical Sciences, University of Delaware, Newark, DE 19716, USA.}
\email{hugodiaz@udel.edu}
\author{Evelyn Herberg}
\address{E. Herberg. The Center for Mathematics and Artificial Intelligence
(CMAI) and Department of Mathematical Sciences, George Mason University,
Fairfax, VA 22030, USA.}
\email{eherberg@gmu.edu}
\begin{document}

\begin{abstract}	
Feature propagation in Deep Neural Networks (DNNs) can be associated to nonlinear discrete dynamical systems. The novelty, in this paper, lies in letting the discretization parameter (time step-size) vary from layer to layer, which needs to be learned, in an optimization framework. The proposed framework can be applied to any of the existing networks such as ResNet, DenseNet or Fractional-DNN. This framework is shown to help overcome the vanishing and exploding gradient issues. Stability of some of the existing continuous DNNs such as Fractional-DNN is also studied. The proposed approach is applied to an ill-posed 3D-Maxwell's equation. 
\end{abstract}

\keywords{deep learning, deep neural network, fractional time derivatives, fractional neural network, residual neural network, optimal network architecture, exploding gradients, vanishing gradients}
\subjclass[2010]{34A08, 49J15, 68T05, 82C32}

\maketitle

\tableofcontents
	
\section{Introduction} \label{sec:Intro}


\noindent Consider a network architecture, for example, residual neural network (ResNet)
\begin{equation}
\label{eq:yintro}
	y^{[\ell]} = y^{[\ell -1]} + \tau \sigma (y^{[\ell-1]}, \theta^{[\ell-1]}) ,
\end{equation}
which contains a parameter $\tau$. It is also possible to consider other architectures such 
as feedforward networks etc. The above neural network can be understood as the 
time-discretization of a non-linear ordinary differential equation (ODE). The feature vector 
$y^{[\ell]}$ is computed by forward propagation from the previous layers feature vector 
$y^{[\ell-1]}$ and network parameters, which are collected in $\theta^{[\ell-1]}$, using an 
activation function $\sigma$. In most of the existing literature, $\tau$ is a given fixed 
constant. The main novelty of this work lies in 
\begin{center}
	replacing $\tau$ by learning variables $\tau^{[\ell]}$
\end{center}
and the treatment of these $\tau^{[\ell]}$. These variables can be understood as 
the `time step-sizes' in Deep Neural Networks (DNNs). 
This paper considers them as optimization variables, not as hyperparameters. Furthermore, the parameters 
$\tau^{[\ell]}$ are allowed to differ from layer to layer, i.e., time grid can be non-equidistant.
Notice that this $\tau$-variable framework can be applied to any of the existing networks of type 
\eqref{eq:yintro}. The deep learning optimization problem will now also learn optimal parameters 
$\tau^{[\ell]}$ in addition to the standard DNN parameters. 
As will be illustrated throughout the article,  the presented approach is not just a 
scaling of the activation function $\sigma$ by $\tau^{[\ell]}$. This will become 
evident when applying the proposed framework to Fractional-DNNs, where 
$\tau^{[\ell]}$ enters in multiple ways, see Remark~\ref{rem:scale}. 

\begin{figure}[t] 
	\includegraphics[width=0.83\textwidth]{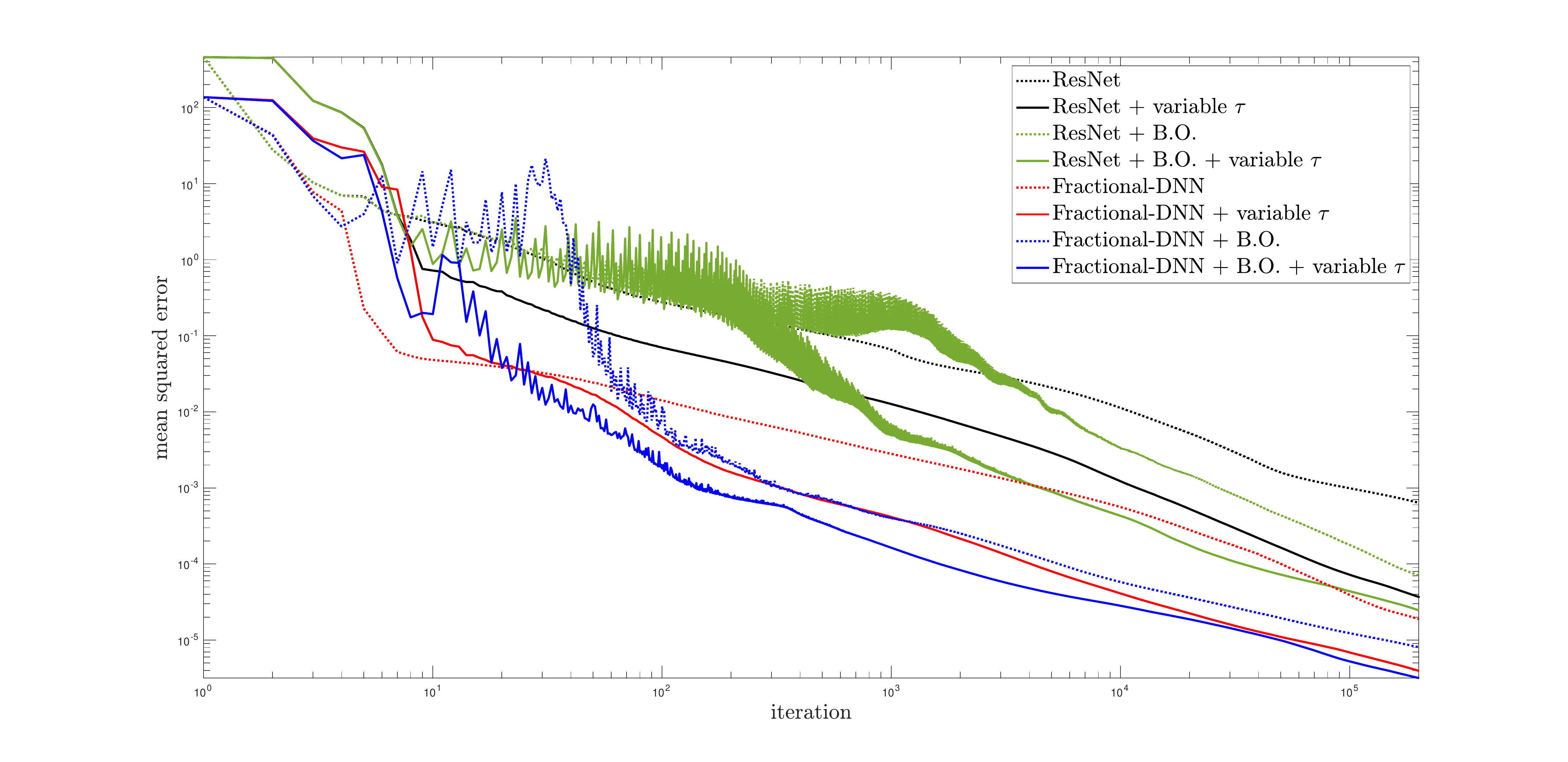}
	\caption{
	The panel shows the mean squared error during training when the variable-$\tau$ 
	framework is applied to a ResNet and a Fractional~DNN for an ill-posed 3D-Maxwell's 
	equation. More details are available in Section~\ref{sec:Num}.
	}
	\label{fig:mse_6_50}
\end{figure}
This proposed approach presents multiple advantages, including: 
\begin{itemize}
	\item The proposed framework leads to \emph{optimal adaptive time discretizations of DNNs}, such as ResNets 
		and Fractional-DNNs, tailored to the optimization/learning problem.
	\item The proposed framework can further help (as is rigorously established) overcome the \emph{vanishing and 
		exploding gradient} problems in networks such as ResNets and Fractional-DNNs. Notice, that motivation 
		behind introducing ResNets
		\cite{he2016deep} was vanishing gradients and Fractional-DNNs was vanishing and exploding gradients 
		\cite{antil2020fractional}.
	\item If $\tau^{[\ell]}$ for any layer $\ell$ is close to zero, then the associated layer is redundant and can be 
		deleted without sacrificing the accuracy. Thus leading to small yet accurate DNNs. 
	\item Variable $\tau^{[\ell]}$ helps improve the training error decay, see Figure~\ref{fig:mse_6_50}.
\end{itemize}
The main idea to approximate parameterized PDEs and solve the inverse problems using Fractional-DNNs has been recently introduced in \cite{antil2021novel}. The present article not only introduces the aforementioned variable time step framework, but for the first time, to the best of our knowledge, also applies the DNNs, such as ResNets and Fractional-DNNs, to ill-posed problems such as Maxwell's equations with Gauss's law. A comparison of these standard DNNs with their time-step variable versions has also been carried out. The problem is ill-posed in the following sense: 
N\'ed\'elec finite elements are traditionally used to discretize Maxwell's equations. However, they are curl-conforming and thus the Gauss's law (divergence condition) cannot be directly imposed \cite{Ciarlet2014}. The DNNs are shown to generalize well on the unseen data and physical domains.

%
%
%
Deep learning is a nascent field of research with many exciting applications, for example imaging science \cite{antil2020fractional,he2016deep,jin2017deep,rawat2017deep,wu2018deep}, biomedical applications \cite{chen2018vox,hammernik2018learning,lee2018deep}, satellite imagery, remote sensing \cite{bischke2017detection,tai2017image,zhang2018missing}, segmentation \cite{ronneberger2015u}, and gaming \cite{silver2017mastering}.
Recently, this topic is starting to receive significant attention from mathematicians
\cite{R.DeVore_BHanin_GPetrova_2021a,weinan2019machine}. Especially, the fact that DNNs of type \eqref{eq:yintro} can be viewed as optimization problems constrained by discrete dynamical systems \cite{antil2020fractional,benning2019deep,chang2018reversible,gunther2020layer,haber2017stable,lu2018beyond,schoenlieb2019research} and partial differential equations \cite{liu2020selection,ruthotto2020deep}. In these settings, the state-of-the-art is to consider an equidistant time grid, where the time step-size $\tau$ is chosen before running the optimization algorithm to identify weights, i.e. $\tau$ is a hyperparameter.

The articles \cite{hayou2021stable,JagtapAdaptive} suggest to consider $\tau^{[\ell]}$ as hyperparameters for physics-informed neural networks and ResNets, respectively. In both cases the hyperparameters $\tau^{[\ell]}$ are seen as scalings applied, outside \cite{hayou2021stable} and inside \cite{JagtapAdaptive}, the activation function. 
In \cite{JagtapAdaptive}, a global $\tau=\tau^{[\ell]}$ for all $\ell$ is considered leading to training error improvement and more accurate solutions,
which will also hold true for our more general setting. In \cite{hayou2021stable}, 
a sequence of $\tau^{[\ell]}$, fulfilling a probabilistic condition, is chosen to
avoid exploding gradients. In contrast, we let the optimization algorithm learn 
$\tau^{[\ell]}$ and provide deterministic arguments to overcome the vanishing 
and exploding gradient problems.

\smallskip
\noindent
{\bf Outline:} 
This article is organized as follows. Section \ref{sec:Prelim} introduces some basic preliminary results and notation.
We introduce definitions of fractional derivatives and state a generic DNN. We also describe an extension of 
this DNN to include a recently introduced bias ordering idea from \cite{antil2021bias} which offers multiple 
advantages such as narrowing the parameter search space.
This is followed by Section \ref{sec:contDNNs} where the relation between continuous DNNs and dynamical systems 
is considered. Special attention is given to two continuous version of DNNs: ResNet (DNN with standard time
derivative) and Fractional-DNN (DNN with fractional time derivative). Stability results for these two DNNs are also 
provided. Notice, that Fractional-DNNs have been recently introduced in \cite{antil2020fractional} for classification and further 
extended in \cite{antil2021novel} to inverse problems with PDEs. They have multiple advantages over ResNets as they can 
incorporate memory into the network due to the nonlocal nature of fractional derivatives and due to the low regularity 
requirements of fractional derivatives, they can be applied to non-smooth functions. As stated above, one of the main motivations behind introducing Fractional-DNN was to overcome vanishing and exploding gradients.  The article \cite{antil2020fractional} provides numerical evidence of overcoming the vanishing gradient problem.

Next, in Section \ref{sec:architecture} we state selected DNN architectures and compare them for a fixed $\tau$. 
The new framework with variable $\tau^{[\ell]}$ is applied to the architectures of Section \ref{sec:architecture} in 
Section~\ref{sec:Learning}. Notice that our approach is broad and can be applied to any network architecture 
of type \eqref{eq:yintro} and it is independent of the choice of the loss functional. 
Clearly, the proposed approach inherits all the positive aspects of these existing networks.
Additionally, the new framework is rigorously shown to overcome vanishing and 
exploding gradient issues (cf. Section \ref{sec:Gradients}).

Finally, in Section~\ref{sec:Num} we illustrate the efficacy of our approach with the help of an ill-posed 3D-Maxwell's equation. The numerical examples validate the above mentioned advantages of the proposed framework. In particular, stability and network reduction.

\section{Preliminaries} \label{sec:Prelim}

The goal of this section is to introduce the relevant notation and abstract optimization problems
arising while training the DNNs. The content of this section is well-known \cite{antil2020fractional,antil2021novel,antil2021bias}.

\begin{table}[h]
\begin{center}
	\begin{tabular}{l l}
		\hline
		Symbol $\qquad$ & Description \\
		\hline
		$L \in \mathbb{N}$ & Number of network layers (i.e. network depth)\\
		$N \in \mathbb{N}$ & Number of distinct data samples\\
		$n_\ell$ & Number of nodes in layer $\ell$ \\
		$y^{[\ell]} \in \mathbb{R}^{n_\ell}$ & Feature vector in layer $\ell$\\
		$\sigma$ & Activation function \\
		$W^{[\ell]} \in \mathbb{R}^{n_{\ell +1} \times n_{\ell }}$ & Weights in layer $\ell$\\
		$b^{[\ell]} \in \mathbb{R}^{n_{\ell +1}}$ & Biases in layer $\ell$\\
		$\tau^{[\ell]} \in \mathbb{R}$ & Time step-size in layer $\ell$\\
		$\theta^{[\ell]} \in \mathbb{R}^{n_{\ell +1}(n_\ell +1)+1}$ & Vector of all variables (weights, biases and time step-size) in layer $\ell$\\
		$P_j^{\ell}$ & Projection matrix from layer $j$ onto layer $\ell$\\ 
		$\phi$ & Adjoint variables \\
		$\mathcal{F}$ & Network represented as a function\\
		$f_\ell$ & Layer function \\
		$J$ & Loss function\\
		$\lambda_1,\lambda_2,\beta \in \mathbb{R}$ & Regularization parameters\\
		$\mathcal{L}$ & Lagrangian \\
		$\{u,S(u)\}$ & Input / Output pair of training data\\
		$\Gamma(\cdot)$ & Euler's Gamma function\\
		\hline
	\end{tabular}
\end{center}
\end{table}

\subsection{Caputo fractional derivative} \label{subsec:Caputo}

In preparation for the Fractional-DNN architecture, we next introduce the left and right 
Caputo fractional derivatives for absolutely continuous functions and refer to 
 \cite[Definitions 2.1 and  2.4, and Proposition 2.3]{antil2021unified} and 
 \cite[(2.4.17) and (2.4.18)]{kilbas2006theory} for details.

\begin{definition} (Left Caputo Fractional Derivative)
	Let $y \in W^{1,1}([0,T];X)$, with $X$ denoting a Banach space. The left Caputo fractional 
	derivative of order $\gamma \in (0,1)$ is given by
	\begin{equation} \label{Lcaputo}
		\partial_t^{\gamma} y(t) 
		=c_\gamma
		 \int_{0}^{t}\frac{y'(r)}{(t-r)^{\gamma}} d{r} ,
	\end{equation}
	where $c_\gamma := \frac{1}{\Gamma(1-\gamma)}$ and  $\Gamma(\cdot)$ is Euler's Gamma function.
\end{definition}
\begin{definition} (Right Caputo Fractional Derivative)
	Let $y \in W^{1,1}([0,T];X)$, with $X$ denoting a Banach space. The right Caputo fractional 
	derivative of order $\gamma \in (0,1)$ is given by
	\begin{equation*}
		\partial_{T-t}^{\gamma} y(t) = - c_\gamma 
		 \int_t^{T} \frac{y'(r)}{(r-t)^{\gamma} } dr.
	\end{equation*}
\end{definition}

Next, we introduce the general deep learning problem as an optimization problem with 
DNN constraints.

\subsection{Deep Learning problem} \label{subsec:Opt}
Consider a neural network architecture with an input layer of dimension $n_0$, $L-1$ hidden layers of dimension $n_\ell$ for $\ell=1,\ldots,L-1$ and an output layer of dimension $n_L$. Then we can represent this network as a function 
\begin{equation} \label{eq:network}
	\mathcal{F} = f_{L-1} \circ f_{L-2} \circ \cdots \circ f_0,
\end{equation}
where $\{f_\ell\}_{\ell=0}^{L-1}$ are the layer functions. These layer functions are parameterized by weight matrices $W^{[\ell]}  \in \mathbb{R}^{n_{\ell +1} \times n_{\ell }}$ and bias vectors $b^{[\ell]} \in \mathbb{R}^{n_{\ell+1}}$. The definition of $f_\ell$ depends on the network architecture.
We will introduce different options in Section \ref{sec:architecture}.

The weights and biases are identified during a training process that requires solving an optimization problem. Let  $\left\{u^{(i)}, S(u^{(i)})\right\}_{i=1}^N$ (input/ output pairs) denote the training data. Then the goal is to match the output of the DNN with the data points
$S(u^{(i)})$. This is accomplished by minimizing a loss functional $J$ and the resulting optimization problem is given by:
\begin{equation}\label{eq:orig}
\begin{aligned}
	&\min_{ \{ W^{[\ell]}\}_{\ell=0}^{L-1},  \{ b^{[\ell]} \}_{\ell=0}^{L-2}      } J \left(  \{ (y^{[L](i)}, S(u^{(i)})) \}_i,  \{ W^{[\ell]}\}_\ell , \{ b^{[\ell]} \}_\ell \right) \\
	&\text{subject to} \qquad y^{[L](i)} = \mathcal{F} \left(u^{(i)}; (  \{ W^{[\ell]}\}_\ell,  \{ b^{[\ell]} \}_\ell) \right) \quad i = 1, \ldots, N.
\end{aligned}
\end{equation}
%
One standard choice for the loss function is the mean squared error
\begin{equation*}
	J := \frac{1}{2N} \sum_{i=1}^{N} || y^{[L](i)} - S(u^{(i)})||_2^2.
\end{equation*}
Another example is the cross-entropy, see for more examples \cite{Goodfellow-et-al-2016}. The choice of $J$ is dictated by the application. For our cause, it is not relevant which of these options is chosen, since our  focus is on DNNs represented by the operator $\mathcal{F}$.

It is also common to add regularization, for example, $\ell^1$ and $\ell^2$ regularizations on weights and biases
\begin{equation}\label{eq:origJlamb}
 J_{\lambda_1} = J + \frac{\lambda_1}{2} \sum_{\ell=0}^{L-1} \left(  ||W^{[\ell]} ||_2^2 + || W^{[\ell]} ||_1 \right) 
 + \frac{\lambda_1}{2} \sum_{\ell=0}^{L-2} \left(  ||b^{[\ell]} ||_2^2 + || b^{[\ell]} ||_1 \right),
\end{equation}
where $\lambda_1 >0$ is the regularization parameter.

Before, we continue, we emphasize that recently, the article \cite{antil2021bias} has extended the above generic
network \eqref{eq:orig} by incorporating ordering among the bias vector components in each layer. The main idea is 
that in each layer $\ell$, with $\ell = 0,\dots, L-2$, one enforces 
	\begin{equation}\label{eq:bias_order}
		b^{[\ell]}_j \le b^{[\ell]}_{j+1}, \quad j=1,\ldots,n_{\ell +1}-1, 
	\end{equation} 
where the subscript $j$ indicates the bias vector component. This approach offers multiple advantages as highlighted
in \cite{antil2021bias}. Our numerical examples further provides a comparison between with and without bias ordering
framework. Notice that the bias ordering is implemented using a penalty framework, see \cite{antil2021bias} for details.

 Next, we provide a mathematical background behind learning the time step-sizes $\tau^{[\ell]}$.
 To start, we discuss a link between some DNNs and dynamical systems.

\section{Continuous DNNs} \label{sec:contDNNs}


In this section, we study the continuous structure of multiple DNNs, cf.~\eqref{eq:yintro} and \eqref{eq:network}.
This section mainly focuses on the stability of these architectures, which will follow from a connection with dynamical systems. For other approaches we refer to \cite{Avant2021ABLR,hayou2021stable} and references therein. Since we are primarily interested in stability results, we start with a basic remark on a (finite) network with a Lipschitz activation function, like ReLU. The finite composition of Lipschitz functions is also a Lipschitz function, and therefore differentiable almost everywhere by Rademacher's theorem. 
In the following we show some historical connections between neural networks and dynamical systems, and later on we consider continuous Fractional-DNNs 
which enables memory into the DNNs. 
 \subsection{Ordinary Dif{}ferential Equations and Neural Networks}
\label{subsec:ODEsandNN}
The relation between Neural Networks (NNs)  and dif{}ferential equations is not new. In fact, in the late '80s, in  \cite{Pineda87}  the following model was considered for the activity of $j$-th neuron:   
\begin{equation}\label{feedfor}
\frac{d y_j}{dt} = -\alpha y_j + \beta \sigma \Bigl( \sum_k  w_{jk} y_k\Bigr) + b_j,
\end{equation}
where $\alpha$ and $\beta$ are (given) positive constants, the weights $\{w_{jk}\}$ represent the connection strength between the $k$-th and $j$-th neurons, and $b_j$ represents a bias. Note that most modern neural network architectures are related to stationary solutions of the ODE above.  
In the present work we restrict our focus to a different connection with dynamical systems (cf.~\eqref{eq:yintro}), which is more  recent, mainly because it has been tested more thoroughly. Also, because we are interested in optimal control, we will primarily focus on the ideas presented in \cite{antil2020fractional}, \cite{neuralOdes}, and \cite{ruthotto2020deep}. Nevertheless, for completeness, we also mention some other related works \cite{Weinan17}  and \cite{Sonoda19} from a dynamical systems point of view, \cite{Tarasov2009} for universal maps with memory, \cite{lifourier21} for problems in the frequency domain,  \cite{RK93} for a Runge-Kutta based NN, PINNs \cite{PINNs21} for PDE-related problems, and SINDy \cite{SINDy18} for data-driven model discovery. Also, 
when ReLU is considered as the activation function, a DNN is a  high-dimensional, piecewise linear function. Therefore, some techniques from the Finite Element and the Monte Carlo methods can be used for its analysis, cf. \cite{Jinchao}.

Motivated by ResNets \cite{he2016deep}, the authors in \cite{neuralOdes} relates DNNs of type \eqref{eq:yintro} 
to a recurrence relation obtained when numerically solving a system of ODEs. For instance, for  given $f:\mathbb{R}\times \mathbb{R}\mapsto \mathbb{R}$ and $y_0\in \mathbb{R}$,  consider the problem: Find a function $y$, such that:
\begin{align}
\begin{aligned}
      y '(t) &=  f(t, y(t)),  \quad \mbox{ in } (0,T),\label{def:ode}\\
   y(0)&= y_0.
\end{aligned}
\end{align}
A solution for this system can be approximated, under mild assumptions on $y$ and $f$, by the Euler method:  
\begin{equation*}
     y(t+\tau)-  y(t) =\int_{t}^{t + \tau}  y'(s) ds
    = \int_{t}^{t + \tau} \!\! f( s, y(s)) ds \approx \tau  \cdot  f (t, y(t)),
\end{equation*}
where we have (formally) used the fundamental theorem of calculus and a left Riemann-sum approximation. Namely, we can understand the layers of a DNN as samples from a continuous system that evolves from the input to the output. Here, the f{}irst and last layers are special cases due to common upsampling/downsampling techniques.

It is worth mentioning, that in \cite[B.2]{neuralOdes} the authors consider $ f= f(y(t),t,\theta) $, where $\theta$ represents the parameters of the DNN. Namely, $\theta$  is independent of $t$ and therefore their ODE system is limited (essentially) to autonomous systems.
Thus, DNNs generated with the method given in \cite{neuralOdes} are smooth by construction. This property allows one to use well-known results in the theory of dynamical systems, control theory, adaptive ODE solvers, among others. An interesting application where smooth trajectories are desired is when 
 self-intersecting trajectories/surfaces are not allowed as in shape optimization (manifold surfaces), cf.  \cite{ShapeOptR,pointflow}.
It is clear that the additional smoothness also limits the usability of the model \cite{dupont2019}. Obviously, the architecture of a neural network must match its purpose, i.e. the given data and desired application case.

Besides the networks of type \eqref{def:ode}, the present work also focuses on problems where the system underneath depends on its history in a nonlocal way. The latter is most commonly found in systems with  \textit{Hysteresis} or delayed ef{}fects. Note that the derivative in \eqref{def:ode} is a local operator, this follows from its pointwise limit def{}inition.
Therefore, based on \cite{antil2021novel,antil2021unified,antil2020fractional} we consider a fractional derivative based approach. As pointed out in \cite{antil2021novel,antil2020fractional}, this serves two main purposes: it acts as a global operator (memory ef{}fect), and the order of a differential equation is allowed to be less than 1, which reduces the smoothness of the system.

\subsection{Stability of continuous Fractional-DNN}
\label{subsec:Stability}

\medskip
As mentioned before, ResNet-like architectures can be connected to a classical ODE system, and therefore we can apply the well-known theories 
to analyze the properties of the DNN \cite{neuralOdes,ruthotto2020deep}. A commonly desired property is the continuous dependence on the data. In the context of machine learning this means that input variables, which are ``close'' should produce outputs of the DNN which are also ``close''. Of course, in the context 
of real-life applications,  the notions of distance is not always known, neither is the right dimension, nor the smoothness/regularity for the system underneath. 
Following \cite{antil2021novel,antil2020fractional}, we consider a DNN architecture that can be related to a different notion of derivative, the so-called fractional derivative, see Section \ref{subsec:Caputo}, and  here we show a stability result for this notion of derivative with respect to the initial data.  In order to do so, we consider 
  $\Omega$ to be an open, bounded and connected subset of $\mathbb{R}^d,$  define
  $E:=\left(L^2(\Omega),\|\cdot\|_\Omega \right)$, and let 
$f:\mathrm{Dom}(f) \subseteq [0,\infty)\times E\mapsto E$. 
To establish the stability of continuous Fractional-DNN, we consider a dynamical system for $y$. 
Notice that similar structure holds for the continuous Fractional-DNN (cf. \eqref{eq:geneqn})
\begin{align}
\begin{aligned}
 \partial_t^\gamma y&=f(t,y), \qquad \mbox{with} \qquad 
 y(0)=y_0,
 \end{aligned}\label{def:FODEs}
\end{align}
where $y_0 \in E,$ and $f$ satisfies the standard assumptions: 
\begin{align}
\begin{cases}
\begin{aligned}
&\mbox{There exist positive constants $T$ and $r$ such that $f$ restricted to   $[0,T]\times B_r(y_0)$ }\\
&\mbox{is continuous, bounded and Lipschitz with respect to the second argument.} 
\end{aligned}
\tag{H} \label{hyp:f}
\end{cases}
\end{align}
The last hypothesis implies there exists $L>0$ such that 
\begin{align*}
\|f(t,y_1)-f(t,y_2)\|_\Omega \leq L \|y_1-y_2 \|_\Omega \quad \forall t\in [0,T], \mbox{ and } \forall y_1,y_2 \in B_r(y_0). 
\end{align*}
Let us remark that $E$ can be replaced by any other space with Radon-Nikodym property but based on the most common loss functions we restrict the analysis to $L^2(\Omega).$ From \cite{antil2021unified} we have the following result connecting the strong and generalized Caputo derivatives
\begin{lemma}[] \label{lemma:eqStongGen}
Let $\gamma\in (0,1)$, and $T>0$.  
If ${y\in W^{1,1}\left((0,T); E \right)}$ then the following equality holds in the $L^1((0,T];E)-$sense%
\begin{align}
    \partial^\gamma_t y(t) = D^\gamma_t(y-y(0))(t), \label{eq:RLeqCaputo}
\end{align}
for a.e. $t\in (0,T]$, where $D^\gamma_t$ denotes the {\it Left Riemann-Liouville fractional derivative}, cf. 
\cite[Def{}inition 2.2]{antil2021unified}.
\end{lemma}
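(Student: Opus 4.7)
The plan is to reduce the identity to the standard semigroup relation between the Riemann--Liouville fractional integral and the derivative, using the absolute continuity granted by $y \in W^{1,1}((0,T);E)$. Throughout, let $I_t^\alpha u(t) := \frac{1}{\Gamma(\alpha)}\int_0^t (t-r)^{\alpha-1}u(r)\,dr$ denote the left Riemann--Liouville fractional integral of order $\alpha \in (0,1)$, so that by the definitions recalled in Section~\ref{subsec:Caputo}, $\partial_t^\gamma y(t) = I_t^{1-\gamma} y'(t)$ and $D_t^\gamma v(t) = \frac{d}{dt} I_t^{1-\gamma} v(t)$.

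First, I would invoke the fact that since $E = L^2(\Omega)$ has the Radon--Nikodym property, any $y \in W^{1,1}((0,T);E)$ admits an absolutely continuous representative satisfying the Bochner fundamental theorem of calculus
\begin{equation*}
  y(r) - y(0) = \int_0^r y'(s)\,ds \qquad \text{for all } r \in [0,T].
\end{equation*}
Substituting this into $I_t^{1-\gamma}(y - y(0))(t)$ and applying Fubini--Tonelli (legitimate because $y' \in L^1((0,T);E)$ and $(t,r,s)\mapsto (t-r)^{-\gamma}\mathbf{1}_{\{0<s<r<t\}}$ is integrable on $(0,T)^3$) to interchange the $dr$ and $ds$ integrals, the inner integral in $r$ evaluates explicitly and one finds
\begin{equation*}
  I_t^{1-\gamma}\bigl(y - y(0)\bigr)(t) \;=\; \frac{1}{\Gamma(2-\gamma)} \int_0^t (t-s)^{1-\gamma} y'(s)\,ds \;=\; I_t^{2-\gamma} y'(t).
\end{equation*}

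Next, I would use the semigroup property $I_t^{2-\gamma} = I_t^1 \circ I_t^{1-\gamma}$ (this is immediate from Fubini and the Beta-function identity, applied to the scalar kernels) to rewrite
\begin{equation*}
  I_t^{2-\gamma} y'(t) \;=\; \int_0^t \bigl(I_s^{1-\gamma} y'\bigr)(s)\, ds.
\end{equation*}
Before differentiating, I would note that $I_\cdot^{1-\gamma} y' \in L^1((0,T);E)$: this is the Bochner version of Young's inequality applied to the convolution of the scalar $L^1(0,T)$-kernel $r \mapsto r^{-\gamma}/\Gamma(1-\gamma)$ with $y' \in L^1((0,T);E)$. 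Then the Lebesgue differentiation theorem (in its Bochner form, again available because $E$ has the Radon--Nikodym property) yields, for a.e. $t \in (0,T]$,
\begin{equation*}
  \frac{d}{dt} I_t^{2-\gamma} y'(t) \;=\; I_t^{1-\gamma} y'(t) \;=\; \partial_t^\gamma y(t),
\end{equation*}
which, combined with the previous identity, gives $D_t^\gamma(y - y(0))(t) = \partial_t^\gamma y(t)$ in the $L^1((0,T];E)$-sense, as claimed.

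The only real subtlety I anticipate is not the algebra but the careful handling of the vector-valued setting: ensuring that Fubini, the fundamental theorem of calculus, and the differentiation theorem all apply to Bochner integrals in $E$. Since $E = L^2(\Omega)$ is reflexive and hence has the Radon--Nikodym property, each of these tools applies verbatim, so the vector-valued case reduces cleanly to the scalar one; this is the point at which the hypothesis on $E$ is essential.
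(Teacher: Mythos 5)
Your argument is correct, but it is genuinely more detailed than what the paper does: the paper's proof of Lemma~\ref{lemma:eqStongGen} is a one-line citation of \cite[Proposition 2.3]{antil2021unified} together with the observation that a reflexive space has the Radon--Nikodym property, whereas you reconstruct the underlying computation from scratch. Your chain --- write $y(r)-y(0)=\int_0^r y'(s)\,ds$, apply Fubini to get $I_t^{1-\gamma}(y-y(0))=I_t^{2-\gamma}y'$ (using $\Gamma(2-\gamma)=(1-\gamma)\Gamma(1-\gamma)$), factor $I_t^{2-\gamma}=I_t^1\circ I_t^{1-\gamma}$ by the semigroup property, and differentiate the indefinite Bochner integral a.e. --- is exactly the standard route to this identity and each step is sound; Young's inequality indeed guarantees $I^{1-\gamma}_\cdot y'\in L^1((0,T);E)$ so that both sides live in $L^1$ as the statement requires. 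What your version buys is self-containedness; what the paper's version buys is brevity by outsourcing precisely this computation to the cited reference. One small caveat on your closing remark: the specific tools you use (the fundamental theorem of calculus for a function with weak derivative in $L^1((0,T);E)$, and Lebesgue differentiation of the indefinite integral of an $L^1$ Bochner function) in fact hold in an arbitrary Banach space, so the Radon--Nikodym property is not strictly \emph{essential} to your argument; it is, however, the hypothesis under which the generalized (Riemann--Liouville--type) Caputo derivative of \cite{antil2021unified} is formulated, which is why the paper invokes it.
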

\begin{proof}
The proof follows from \cite[Proposition 2.3]{antil2021unified}, and the fact that every ref{}lexive Banach space has the {\it Radon-Nikodym property}.
\end{proof}
We write the Left Caputo derivative in the generalized Caputo derivative form \eqref{eq:RLeqCaputo},
because several of the well-known results, which hold for standard ODEs, also have their counterparts 
in the generalized Caputo derivative setting.
 For instance, the solution operator for a non-autonomous fractional ODE 
 can be represented in terms of a Volterra integral, cf.~\cite[Theorem 2.1]{Diethelm2002}. By using this 
 integral representation,  the  next proposition shows the stability of the fractional ODE 
 \eqref{def:FODEs} with respect to its initial value, when the solution is smooth enough.
\begin{proposition} 
Given  $y_0\in E$, and  $f$ that satisf{}ies (\ref{hyp:f}). If $y_\alpha, y_\beta \in W^{1,1}((0,T); E)$ solve (\ref{def:FODEs}) with initial conditions 
$y_{\alpha,0}$ and $y_{\beta,0}$  both in  $B_r(y_0)$. Then,   
\begin{align}
\|y_\alpha -y_\beta \|_{L^1(0,T;E)}\leq C \|y_{\alpha,0}-y_{\beta,0} \|_\Omega,
\end{align} 
where $C=C(\gamma,T,L)>0$.  
\end{proposition}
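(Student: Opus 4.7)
The plan is to reduce the stability estimate to an application of the fractional (Henry--Gronwall) inequality, after first rewriting the fractional ODE as a Volterra integral equation.

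First I would use Lemma \ref{lemma:eqStongGen} to rewrite \eqref{def:FODEs} in the generalized Caputo form, and then invoke the Volterra representation cited from \cite[Theorem 2.1]{Diethelm2002}: any $W^{1,1}((0,T);E)$ solution $y$ of $\partial_t^\gamma y = f(t,y)$ with $y(0)=y_{0}$ satisfies
\begin{equation*}
y(t) = y_{0} + \frac{1}{\Gamma(\gamma)} \int_{0}^{t} (t-s)^{\gamma-1} f(s,y(s)) \, ds \quad \text{for a.e.\ } t\in (0,T].
\end{equation*}
Applying this to both $y_\alpha$ and $y_\beta$ and subtracting yields
\begin{equation*}
y_\alpha(t) - y_\beta(t) = (y_{\alpha,0} - y_{\beta,0}) + \frac{1}{\Gamma(\gamma)} \int_{0}^{t} (t-s)^{\gamma-1} \bigl[f(s,y_\alpha(s)) - f(s,y_\beta(s))\bigr] ds.
\end{equation*}

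Next I would take the $E$-norm, use the triangle inequality, and apply the Lipschitz hypothesis \eqref{hyp:f} on $B_r(y_0)$, which gives
\begin{equation*}
\|y_\alpha(t) - y_\beta(t)\|_\Omega \leq \|y_{\alpha,0} - y_{\beta,0}\|_\Omega + \frac{L}{\Gamma(\gamma)} \int_{0}^{t} (t-s)^{\gamma-1} \|y_\alpha(s) - y_\beta(s)\|_\Omega \, ds.
\end{equation*}
This is a scalar inequality in the real-valued function $\varphi(t):=\|y_\alpha(t)-y_\beta(t)\|_\Omega$ of precisely the form to which the generalized (fractional) Gronwall--Henry inequality applies. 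Invoking that inequality produces a pointwise bound
\begin{equation*}
\varphi(t) \leq \|y_{\alpha,0} - y_{\beta,0}\|_\Omega \, E_\gamma\bigl(L t^\gamma\bigr),
\end{equation*}
where $E_\gamma$ denotes the one-parameter Mittag-Leffler function. Integrating over $(0,T)$ and setting $C := \int_{0}^{T} E_\gamma(L t^\gamma)\, dt$, which depends only on $\gamma$, $T$, and $L$, yields the claimed $L^1$ stability bound.

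The main obstacle, in my view, is not the algebra but the bookkeeping: one must ensure the trajectories $y_\alpha(s),y_\beta(s)$ actually stay within the ball $B_r(y_0)$ on $[0,T]$ so that the Lipschitz estimate from \eqref{hyp:f} is legitimate at every $s$ used in the integral (the hypothesis is only local), and one must justify the use of the scalar fractional Gronwall lemma for a function $\varphi$ arising from $E$-valued integrands, which is standard but should be cited or verified using the Bochner-integral version together with the fact that $\|\cdot\|_\Omega$ commutes with the scalar weight $(t-s)^{\gamma-1}$. A minor subtlety is whether the proposition is meant for a global-in-time solution or whether $T$ should be taken small enough that confinement to $B_r(y_0)$ is automatic; either reading is compatible with the statement, and in the latter case the containment is free.
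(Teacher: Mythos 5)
Your proposal follows essentially the same route as the paper: recast the fractional ODE via Lemma \ref{lemma:eqStongGen} as a Volterra integral equation (citing Diethelm--Ford), subtract the two representations, apply the Lipschitz bound from \eqref{hyp:f}, close with a Gronwall-type estimate, and integrate over $(0,T)$. The one substantive difference is the closing lemma. The paper applies the classical integral-form Gronwall inequality directly to the weakly singular kernel and obtains the bound $\exp\bigl(\tfrac{L}{\Gamma(1-\gamma)}\tfrac{t^{1-\gamma}}{1-\gamma}\bigr)$; strictly speaking the classical lemma requires a kernel depending on the integration variable alone, not on both $t$ and $s$, so that step is formally delicate. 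You instead invoke the fractional (Henry--Gronwall) inequality and obtain the Mittag--Leffler bound $E_\gamma(Lt^\gamma)$, which is the technically correct tool for kernels of the form $(t-s)^{\gamma-1}$ and yields the same conclusion $C=C(\gamma,T,L)$ after integration. Your version of the Volterra kernel, $\tfrac{1}{\Gamma(\gamma)}(t-s)^{\gamma-1}$, is also the standard one for a Caputo equation of order $\gamma$, whereas the paper writes $c_\gamma(t-\tau)^{-\gamma}$ with $c_\gamma=1/\Gamma(1-\gamma)$; the overall argument is insensitive to this, but your normalization is the one consistent with the cited representation theorem. Your remark about confinement of the trajectories to $B_r(y_0)$ flags a genuine implicit assumption that the paper also makes without comment; it does not affect the validity of either argument under the stated hypotheses, but it is the right caveat to record.
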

\begin{proof}  By Lemma \ref{lemma:eqStongGen}, and because $E$ is ref{}lexive and therefore has the Radon-Nikodym property, we can recast (\ref{def:FODEs}) as  (\ref{eq:RLeqCaputo}), with $y(0)\in \{y_{\alpha,0},\, y_{\beta,0}\}$, and represent each solution  in terms of a  nonlinear Volterra integral, cf. \cite[Lemma 2.1]{Diethelm2002}. Namely, if 
$y_\alpha, y_\beta$ represent the solutions for (\ref{def:FODEs}) with corresponding initial conditions 
$y_{\alpha,0}, y_{\beta,0}$, then for $t\in [0,T]$, and a.e. $x\in \Omega$
\begin{align*}
    y_\alpha(x,t)= y_{\alpha,0}(x)+ c_\gamma \int_{0}^t \frac{1}{(t-\tau)^{\gamma}}f(\tau,y_\alpha(x,\tau))d\tau,\\
    y_\beta(x,t)= y_{\beta,0}(x)+ c_\gamma\int_{0}^t \frac{1}{(t-\tau)^{\gamma}}f(\tau,y_\beta(x,\tau))d\tau ,
\end{align*}
where we recall $c_\gamma = \frac{1}{\Gamma(1-\gamma)}$.
Then,
\begin{align*}
 \|y_\alpha(\cdot,t)-y_\beta(\cdot,t)\|_\Omega 
  &\leq   \left\|y_{\alpha,0} -y_{\beta,0} \right\|_\Omega + c_\gamma\int_{0}^t \frac{1}{(t-\tau)^{\gamma}}\left\| f(\tau,y_\alpha)-f(\tau,y_\beta)  \right\|_\Omega d\tau \\
   &\leq   \left\|y_{\alpha,0} -y_{\beta,0} \right\|_\Omega + c_\gamma\int_{0}^t \frac{1}{(t-\tau)^{\gamma}}
   L\left\| y_\alpha(\cdot,\tau)-y_\beta(\cdot,\tau)  \right\|_\Omega d\tau
\end{align*}
Finally, from Gronwall's inequality in its integral form  
\begin{align*}
\|y_\alpha(\cdot,t)-y_\beta(\cdot,t)\|_\Omega&\leq  \|y_{\alpha,}-y_{\beta,0}\|_\Omega \exp \left( 
 \frac{L}{\Gamma(1-\gamma)} 
 \int_{0}^t \frac{1}{(t-\tau)^{\gamma}} d\tau \right)\\
&= \|y_{\alpha,0}-y_{\beta,0}\|_{\Omega} \exp \left(
 \frac{L}{\Gamma(1-\gamma)} 
 \frac{t^{1-\gamma}}{1-\gamma} \right),
\end{align*} 
and integrating over $(0,T)$ concludes the proof.
\end{proof}
%
\begin{remark}
It is important to point out that the previous results assume the regularity $W^{1,1}((0,T);E)$, but for most problems in Machine Learning the regularity of solutions is still an open question. Even at the ``discrete level'' the regularity depends on the data, DNN architecture, optimization algorithm, loss function, among others factors.
Another difficulty is that DNNs can have different number of neurons in each layer, i.e., the space $E$ can change in time.
\end{remark}

Finally, and as mentioned before, a DNN with Lipschitz activation functions defines a locally Lipschitzian operator.
 Later in Section~\ref{sec:Gradients}, we will explore how the activation function and weights affect locally the gradient of a DNN, and therefore the Lipschitz constant,
and we will study the vanishing and exploding gradients problem of various DNNs under the variable$-\tau$ framework which is introduced  in Section~\ref{sec:Learning}.


\section{Network architectures with fixed $\tau$-parameter} \label{sec:architecture}
Let us begin by stressing that in general the proposed framework with variable $\tau$ can be applied to any DNN.
We will illustrate our ideas using three representative DNNs.
Subsequently, we will describe their strengths and weaknesses.

The first network architecture is ResNet \cite{he2016deep}. 
As described in the previous section (see \eqref{def:ode}), this network arises after adding an identity 
map to a standard feedforward network. This leads to connectivity between the adjacent 
layers. In order to connect all layers and additionally be able to approximate non-smooth 
functions, we refer to DenseNet \cite{huang2017densely} and Fractional-DNN 
\cite{antil2020fractional}.
We remark that there also exist other approaches that attempt to induce multilayer connections, e.g. Highway Net \cite{HighwayNet2015}, AdaNet \cite{AdaNet2017}, ResNetPlus \cite{ResNetPlus2018}, etc.

DenseNet is an ad-hoc method that uses the feature maps of all preceding layers as inputs into all subsequent layers. 
Meanwhile, Fractional-DNN can be viewed as a time-discretization of a fractional in time non-linear ODE of type \eqref{def:FODEs}, connecting all layers in a mathematically rigorous manner. Both approaches, DenseNet and 
Fractional-DNN, improve the vanishing gradient effect issue due to the memory effect incorporated. Furthermore, 
Fractional-DNN allows approximation of non-smooth functions and thus can also potentially help with exploding 
gradients.

We recall the \textbf{ResNet} \cite{he2016deep} with equidistant time-steps $\tau$, cf. \eqref{eq:yintro}. The feature vector 
$y^{[\ell]} \in \mathbb{R}^{n_\ell}$ in layer $\ell=1,\ldots,L$ is computed by forward propagation  in the 
following way
\begin{align*}
	y^{[\ell]} &= P_{\ell-1}^{\ell} y^{[\ell-1]} + \tau \sigma \left(  W^{[\ell-1]} y^{[\ell-1]} + b^{[\ell-1]} \right), \qquad \ell=1,\ldots,L,
\end{align*}	
where $P_0^1=0 \in \mathbb{R}^{n_0 \times n_1}$ and $y^{[0]} = u$.
Here, $\sigma$ is a nonlinear activation function, for instance, ReLU \cite{Goodfellow-et-al-2016}, 
$\tau$ is the fixed time-step length and $u$ is the input data. Notice that, if all the layers are of same size, then $P_{\ell-1}^{\ell}$ equals an identity matrix. In general, $P_{\ell-1}^{\ell}$ will allow layers to have different sizes, i.e.,
\begin{equation*}
	\dim (P_{\ell-1}^\ell y^{[\ell-1]}) = \dim (y^{[\ell]}).
\end{equation*}
While ResNet does introduce connectivity between adjacent layers, we are also interested in fully connected networks, for instance, a DenseNet \cite{huang2017densely}.

In a \textbf{DenseNet} the connection through all layers is achieved by the following forward propagation 
\begin{align*}
	y^{[\ell]} &= \sum_{i=0}^{\ell-1} P_{i}^\ell y^{[i]} + \sigma \left(  W^{[\ell-1]} y^{[\ell-1]} + b^{[\ell-1]} \right), \qquad \ell=1,\ldots,L,
\end{align*}
where $y^{[0]} = u$ is the input data.

Notice that this method does not contain a time step-size like parameter. It is possible to artificially add a parameter $\tau$ before the activation function $\sigma$. The connection of the resulting expression to a dynamical system remains unclear.
Instead of DenseNet, we focus on Fractional-DNN. In addition, to connecting all layers, the Fractional-DNN can be understood as a time-discretization of a dynamical system of type \eqref{def:FODEs}. Hence, learning the time step-sizes is a meaningful task in this setup.

We recall the \textbf{Fractional-DNN}, with equidistant time-steps $\tau$, from \cite{antil2020fractional,antil2021novel}. 
It corresponds to a time-discretization of a system of type \eqref{def:FODEs}. 
The forward propagation for the Fractional-DNN is given by
\begin{align*}
	y^{[\ell]} &= P_{\ell-1}^\ell y^{[\ell-1]} - \sum_{j=1}^{\ell-1} a_{\ell-j} (P_{j}^\ell y^{[j]} - P_{j-1}^\ell y^{[j-1]}) + \tau^{\gamma} \Gamma(2-\gamma) \sigma   \left(  W^{[\ell-1]} y^{[\ell-1]} + b^{[\ell-1]} \right),
\end{align*}
where $\ell=1,\ldots,L$, $y^{[0]} = u$, and $P_j^\ell$ as before. Moreover 
\begin{equation*}
	a_{\ell-j}:= (\ell+1-j)^{1-\gamma} - (\ell-j)^{1-\gamma}.
\end{equation*}
\begin{remark}
In Section \ref{subsec:learningfrac} below, we will consider a Fractional-DNN 
with variable $\tau$, i.e., $\tau^{[\ell]}$ for $\ell=1,\ldots,L-1$. In this case, the 
coefficients $a_{\ell-j}$ will depend on $\tau^{[j]},\ldots,\tau^{[\ell]}$.
\end{remark}
We conclude this section by emphasizing that depending on the number of DNN 
outputs, the last layer may have different size, which can be captured via
	\begin{equation*}
		y^{[L]} = W^{[L-1]} y^{[L-1]}.
	\end{equation*}
For the remainder of the paper, we will assume such a setup for the last layer.

\section{Variable-$\tau$ framework for DNNs} \label{sec:Learning}
%
Instead of a fixed $\tau$, we propose to use a different $\tau^{[\ell]}$ for each
layer, which is learned during the training process. This allows us to optimize the 
``time step-sizes" $\tau^{[\ell]}$, resulting in what can be viewed as an adaptive 
time-discretization of the ODE tailored to the optimization (learning) problem.
The resulting optimization problem is given by (cf.~\ref{eq:orig})
\begin{equation}\label{eq:Ptau}
\begin{aligned}
	&\min_{ \{ W^{[\ell]}\}_{\ell=0}^{L-1},  \{ b^{[\ell]} \}_{\ell=0}^{L-2}  , \{ \tau^{[\ell]} \}_{\ell=0}^{L-2}    } J_{\lambda} \left(  \{ (y^{[L](i)}, S(u^{(i)})) \}_i,  \{ W^{[\ell]}\}_\ell , \{ b^{[\ell]} \}_\ell , \{ \tau^{[\ell]} \}_\ell \right) \\
	&\text{subject to} \qquad y^{[L](i)} = \mathcal{F} \left(u^{(i)}; (  \{ W^{[\ell]}\}_\ell,  \{ b^{[\ell]} \}_\ell, \{ \tau^{[\ell]} \}_\ell) \right) \quad i = 1, \ldots, N.
\end{aligned}
\end{equation}
Constraints on $\tau^{[\ell]}$, for instance, non-negativity can be easily incorporated.
Recall, from \eqref{eq:origJlamb} that $J_{\lambda_1}$ contains the regularization for 
the weights $W^{[\ell]}$ and $b^{[\ell]}$. Additional regularization on $\tau^{[\ell]}$ can 
be easily introduced as
\begin{equation*}
	J_{\lambda} := J_{\lambda_1} + \frac{\lambda_2}{2} \sum_{\ell =0}^{L-2} \left( || \tau^{[\ell]} ||^2_2 + || \tau^{[\ell]} ||_1  \right),
\end{equation*}
where $\lambda = \lambda(\lambda_1,\lambda_2)$. 

We apply the $\tau$-variable framework to the ResNet and the Fractional-DNN discussed above.

\subsection{ResNet with variable $\tau$}

Consider \eqref{eq:Ptau} with $\mathcal{F}$ denoting the ResNet with variable $\tau$
\begin{equation}\label{eq:OVarRes}
	\begin{aligned} 
		y^{[\ell]} &= P_{\ell-1}^\ell y^{[\ell-1]} + \tau^{[\ell-1]} \sigma (W^{[\ell-1]} y^{[\ell-1]} + b^{[\ell-1]}), \qquad \ell=1,\ldots, L-1, \\
		y^{[L]} &= W^{[L-1]} y^{[L-1]}. 
	\end{aligned}
\end{equation}
For simplicity of notation, we write $J$ instead of $J_\lambda$ and collect $W^{[\ell]}$, $b^{[\ell]}$, 
and $\tau^{[\ell]}$ for all $\ell$ into one vector $\theta$ and denote the adjoint variables by $\phi$.

Following the approach from \cite{antil2020fractional} and to derive the optimality system, 
we introduce the Lagrangian functional
\begin{align*}
	\mathcal{L} (y, \theta, \phi) = &J (\theta) 
	- \sum_{\ell=1}^{L-1} \left\langle y^{[\ell]} - P_{\ell-1}^\ell y^{[\ell-1]} - \tau^{[\ell-1]} \sigma
	(W^{[\ell-1]} y^{[\ell-1]} + b^{[\ell-1]}), \phi^{[\ell]} \right\rangle \\ 
	&- \left\langle y^{[L]} - W^{[L-1]} y^{[L-1]} , \phi^{[L]} \right\rangle .
\end{align*}
Setting the variation of $\mathcal{L}$ with respect to $\phi$ equals zero, we recover  the
\emph{state equation} \eqref{eq:OVarRes}.
Similarly, setting the variation of $\mathcal{L}$ with respect to $y$ equals zero, we 
arrive at the \emph{adjoint system}
\begin{align*}
	\phi^{[\ell]} &= (P_{\ell}^{\ell+1})^{\top}\phi^{[\ell+1]} 
	+ \tau^{[\ell]} (W^{[\ell]})^{\top} \left( \phi^{[\ell+1]} \odot  \sigma' ( W^{[\ell]} y^{[\ell]} + b^{[\ell]} )  \right) , \qquad \ell=L-2, \ldots, 1, \\
	\phi^{[L-1]} &=  (W^{[L-1]})^{\top} \phi^{[L]} , \\
	\phi^{[L]} &= \partial_{y^{[L]}} J (\theta) = y^{[L]} - S(u),
\end{align*}
where the last equality is due to the specific choice of the least-squares loss function. 
It will be different in case of the cross-entropy softmax, for instance.

Since we will be solving the above optimization problem using a gradient-based method, 
we also need to evaluate the derivatives with respect to $\theta$:
\begin{align*}
	\partial_{W^{[L-1]}} \mathcal{L} &= \phi^{[L]} (y^{[L-1]})^{\top} + \partial_{W^{[L-1]}} J (\theta), \\
	\partial_{W^{[\ell]}} \mathcal{L}     &= y^{[\ell]} \left( \phi^{[\ell+1]} \odot \tau^{[\ell]} \sigma' (W^{[\ell]} y^{[\ell]} +b^{[\ell]}) \right)^{\top} + \partial_{W^{[\ell]}} J(\theta),  && \ell=0,\ldots,L-2, \\
	\partial_{b^{[\ell]}} \mathcal{L}      &=  (\phi^{[\ell+1]})^{\top} \tau^{[\ell]} \sigma' \left( W^{[\ell]} y^{[\ell]} + b^{[\ell]} \right) + \partial_{b^{[\ell]}} J(\theta),   && \ell=0,\ldots,L-2, \\
	\partial_{\tau^{[\ell]}} \mathcal{L}  &= \left\langle  \sigma (W^{[\ell]} y^{[\ell]} + b^{[\ell]}), \phi^{[\ell+1]} \right\rangle + \partial_{\tau^{[\ell]}} J(\theta), && \ell=0,\ldots,L-2 . 
\end{align*}

Next, we state the Fractional-DNN \cite{antil2020fractional} but now with variable $\tau^{[\ell]}$. Recall that, in contrast to a ResNet, 
the Fractional-DNN allows connectivity between all the layers.

\subsection{Fractional-DNN with variable $\tau$} \label{subsec:learningfrac}

Consider a time-discretization $t_0 \le t_1 \le \cdots \le t_L$ with $L \in \mathbb{N}$ and set
$I_\ell := (t_\ell,t_{\ell+1}]$ and $\tau^{[\ell]} = t_{\ell+1} - t_\ell$ for $0 \le \ell \le L-1$. 
Throughout, we will assume that $\tau^{[\ell]} > 0$ to justify division by $\tau^{[\ell]}$.

We generalize the numerical scheme introduced in \cite{YLin_CXu_2007a,YLin_XLi_CXu_2011a}
to a non-equidistant time discretization and obtain the discrete approximation of the left-sided Caputo 
fractional derivative of order $\gamma \in (0,1)$. For $0 \le \ell \le L-1$, we have that:
\begin{align}
	\partial_t^{\gamma} y(x,t_{\ell+1}) &= c_\gamma \int_0^{t_{\ell+1}} \frac{\partial_t y(x,t)}{(t_{\ell+1}-t)^{\gamma} } dt
	= c_\gamma  \sum_{j=0}^\ell \int_{I_j} \frac{\partial_t y(x,t)}{(t_{\ell+1}-t)^{\gamma} } dt   \nonumber	\\ 
	&= c_\gamma  \sum_{j=0}^\ell \frac{y(x,t_{j+1}) -y(x,t_j)
	}{\tau^{[j]}} \int_{I_j} \frac{1}{(t_{\ell+1}-t)^{\gamma}} dt + r_{\gamma}^{\ell+1}
	\label{eq:fdiff}	
\end{align}
where we have used the finite difference approximation. Here $r_{\gamma}^{\ell+1}$ 
denotes the remainder from the Taylor formula which can be estimated as described in 
\cite[Section 3.2.1]{RHNochetto_EOtarola_AJSalgado_2014b}. After carrying 
out the integration in \eqref{eq:fdiff}, we arrive at
\begin{align}
	\partial_t^{\gamma} y(x,t_{\ell+1}) 	
	&= \textstyle c_\gamma  \sum_{j=0}^\ell \frac{y(x,t_{j+1}) -y(x,t_j)}{\tau^{[j]}} \frac{1}{(1-\gamma)} \left( \left( \sum_{i=j}^\ell \tau^{[i]} \right)^{1-\gamma} - \left( \sum_{i=j+1}^\ell \tau^{[i]} \right)^{1-\gamma} \right) + r_{\gamma}^{\ell+1}   \notag \\
	&= \textstyle c_{\gamma-1} 
	\sum_{j=0}^\ell \frac{1}{\tau^{[j]}} \left( \left( \sum_{i=j}^\ell \tau^{[i]} \right)^{1-\gamma} - \left( \sum_{i=j+1}^\ell \tau^{[i]} \right)^{1-\gamma} \right) \left( y(x,t_{j+1}) -y(x,t_j) \right) + r_{\gamma}^{\ell+1} .
	\label{eq:Lcapapprox}
\end{align}
Analogously, we obtain the approximation of the right-sided Caputo fractional derivative of order 
$\gamma \in (0,1)$ for $0 \le \ell \le L-1$:
\begin{align}
	\textstyle \partial_{T-t}^{\gamma}y(x,t_\ell) = - c_{\gamma -1} 
	\sum_{j=\ell}^{L-1} \frac{1}{\tau^{[j]}} \left( \left( \sum_{i=\ell}^{j} \tau^{[i]} \right)^{1-\gamma} - \left( \sum_{i=\ell}^{j-1} \tau^{[i]} \right)^{1-\gamma} \right) \left( y(x,t_{j+1}) -y(x,t_j) \right) + r_{\gamma}^{\ell} .
	\label{eq:Rcapapprox}
\end{align}
Before, we apply the above discretization to the Fractional-DNN formulation, we consider two generic nonlinear
ODEs of type \eqref{def:FODEs} (cf. e.g. \cite[Section 4.1]{antil2020fractional}) with $\gamma\in (0,1)$: 
\begin{equation}\label{eq:geneqn}
\begin{aligned}
	\partial_t^{\gamma} y(x,t) &= f(t,y(x,t)), \qquad y(x,0)=y_0,\\ 
	\partial_{T-t}^\gamma y(x,t) &= f(t,y(x,t)), \qquad y(x,T) =y_T.
\end{aligned}
\end{equation}
This also links back to Subsection \ref{subsec:Stability}, where the stability of the above continuous DNN 
was discussed. Here, we will move on to formulate the discrete version. 

Using the discretizations from \eqref{eq:Lcapapprox} and \eqref{eq:Rcapapprox} in \eqref{eq:geneqn}, 
for $0 \le \ell \le L-1$, we arrive at 
\begin{align*}
	y(x,t_{\ell+1}) &= y(x,t_\ell) - \sum_{j=0}^{\ell-1} a_{\ell,j} (y(x,t_{j+1}) - y(x,t_j)) + (\tau^{[\ell]})^{\gamma} c_{\gamma-1}^{-1} 
	f(t_\ell,y(x,t_\ell)), \\
	y(x,t_\ell) &= y(x,t_{\ell+1}) + \sum_{j=\ell +1}^{L-1} b_{j,\ell} (y(x,t_{j+1}) - y(t_j)) + (\tau^{[\ell]})^{\gamma} 
	c_{\gamma-1}^{-1} 
	f(t_\ell,y(x,t_\ell)) ,
\end{align*}
with
\begin{align*}
	a_{\ell,j}&:= \textstyle \frac{(\tau^{[\ell]})^{\gamma}}{\tau^{[j]}} \left( \left( \sum_{i=j}^\ell \tau^{[i]} \right)^{1-\gamma} - \left( \sum_{i=j+1}^\ell \tau^{[i]} \right)^{1-\gamma} \right),\\
	b_{j,\ell}&:= \textstyle \frac{(\tau^{[\ell]})^{\gamma}}{\tau^{[j]}}  \left( \left( \sum_{i=\ell}^{j} \tau^{[i]} \right)^{1-\gamma} - \left( \sum_{i=\ell}^{j-1} \tau^{[i]} \right)^{1-\gamma} \right).
\end{align*}
Notice that the equidistant case, $\tau^{[\ell]}= \tau$ for all $\ell$, considered throughout the literature, 
is a special case of the above setting. Additionally, in the equidistant setting, we have $a_{j,\ell} = b_{j,\ell}$,
which may not hold in the above generic setting.

After these preparations, we are ready to apply the $\tau$-variable framework to Fractional-DNN.
Here, we take into account that the feature vectors $y^{[\ell]}$ may have different sizes across the layers.
Thus, as in case of $\tau$-variable ResNet, we introduce projection matrices $P_{j}^\ell$ for 
$j=0,\ldots,\ell-1$ and $\ell = 1,\ldots, L-1$ with $\dim (P_{j}^\ell y^{[j]}) = \dim (y^{[\ell]})$. The 
resulting Fractional DNN with variable $\tau$ is
\begin{equation}\label{eq:state_var_tau}
\begin{aligned}
	y^{[\ell]} &= P_{\ell -1}^{\ell} y^{[\ell-1]} - \sum_{j=0}^{\ell-2} a_{\ell-1,j} (P_{j+1}^{\ell} y^{[j+1]}-P_{j}^{\ell} y^{[j]}) \\
	&\quad + (\tau^{[\ell-1]})^\gamma 
	c_{\gamma-1}^{-1} 
	\sigma ( W^{[\ell-1]} y^{[\ell-1]} + b^{[\ell-1]} ) , \qquad \ell=1,\ldots,L-1 \\
	y^{[L]} &= W^{[L-1]} y^{[L-1]}.
\end{aligned}
\end{equation}

	\begin{remark}\label{rem:scale}
		Before we proceed further, we stress that the $\tau$-variable framework is 
		not merely a scaling of the activation by $\tau^{[\ell]}$.
		Indeed, in \eqref{eq:state_var_tau} the scaling in front of $\sigma$ is not simply 
		$\tau^{[\ell]}$ but is $(\tau^{{\ell-1}})^\gamma c_{\gamma-1}^{-1}.$ 
		Furthermore, $a_{\ell-1,j}$ also contains $\tau^{[j]},\ldots,\tau^{[\ell-1]}$, which 
		makes the impact of the time step-sizes much more complex than scaling of 
		$\sigma$.
	\end{remark}
As in the ResNet case we next derive the optimality conditions. This requires introducing 
the Lagrangian formulation as before. In this fractional derivative setting, we observe a subtle issue.
It is well-known that there are two approaches to derive the optimality conditions -- optimize-then-discretize
and discretize-then-optimize \cite{HAntil_DPKouri_MDLacasse_DRidzal_2018a,MHinze_RPinnau_MUlbrich_SUlbrich_2009a}. 
Below, in the $\tau$-variable fractional setting, we observe that the two 
approaches do not coincide. It is not difficult to see that in the first case, 
optimize-then-discretize, we obtain the following adjoint equation
\begin{equation}\label{eq:adj_var_tau0}
\begin{aligned}
	\phi^{[\ell]} &= (P_{\ell}^{\ell+1})^{\top} \phi^{[\ell + 1]} + \sum_{j=\ell+1}^{L-2}b_{j,\ell} ((P_{\ell}^{j+1})^{\top}\phi^{[j+1]} - (P_{\ell}^{j})^{\top}\phi^{[j]} )  \\ 
	&\qquad + (\tau^{[\ell]})^\gamma 
	c_{\gamma-1}^{-1} 
	\left[ (W^{[\ell]})^{\top} \left( \phi^{[\ell + 1]} \odot \sigma'(W^{[\ell]} y^{[\ell]} + b^{[\ell]}) \right) \right], \qquad \ell=L-2,\ldots,1,\\
	\phi^{[L-1]} &= \left(W^{[L-1]}\right)^{\top} \phi^{[L]},\\
	\phi^{[L]} &= \partial_{y^{[L]}} J(\theta) .
\end{aligned}
\end{equation}
Next, we derive the adjoint equations for the second approach, i.e., discretize-then-optimize.
We begin by introducing the Lagrangian
\begin{align*}
	\mathcal{L} (y, \theta, \phi) = &J (\theta)
	- \sum_{\ell=1}^{L-1} \langle y^{[\ell]} - P_{\ell-1}^{\ell}y^{[\ell-1]} + \sum_{j=0}^{\ell-2} a_{\ell-1,j} (P_{j+1}^{\ell}y^{[j+1]}- P_{j}^{\ell}y^{[j]}) \\
	&\qquad \quad  - (\tau^{[\ell-1]})^\gamma 
	c_{\gamma-1}^{-1} 
	\sigma ( W^{[\ell-1]} y^{[\ell-1]} + b^{[\ell-1]} )  , \phi^{[\ell]} \rangle \\ 
	&- \left\langle y^{[L]} - W^{[L-1]} y^{[L-1]} , \phi^{[L]} \right\rangle .
\end{align*}
Setting the variation of $\mathcal{L}$ with respect to $\phi$ equal zero, we obtain the state 
equation \eqref{eq:state_var_tau}. To derive the adjoint equation, we calculate the variation of 
$\mathcal{L}$ with respect to $y^{[\ell]}$ for every $\ell=1,\ldots,L$. A detailed calculation can be found in Appendix \ref{appA1}.
Setting this variation equal to zero, we arrive at the following adjoint system
\begin{equation}\label{eq:adj_var_tau1}
\begin{aligned}
	\phi^{[\ell]} &= (1- a_{\ell,\ell-1} )(P_{\ell}^{\ell+1})^{\top} \phi^{[\ell + 1]} + \sum_{j=\ell+2}^{L-1}  (a_{j-1,\ell}-a_{j-1,\ell-1} ) (P_{\ell}^{j})^{\top} \phi^{[j]}   \\ 
	&\qquad + (\tau^{[\ell]})^\gamma 
	c_{\gamma-1}^{-1} 
	\left[ (W^{[\ell]})^{\top} \left( \phi^{[\ell + 1]} \odot \sigma'(W^{[\ell]} y^{[\ell]} + b^{[\ell]}) \right) \right], \qquad \ell=L-2,\ldots,1,\\
	\phi^{[L-1]} &= \left(W^{[L-1]}\right)^{\top} \phi^{[L]},\\
	\phi^{[L]} &= \partial_{y^{[L]}} J(\theta).
\end{aligned}
\end{equation}
Below, we collect all summands that contain factors $b_{j,\ell}$ in \eqref{eq:adj_var_tau0} on the left side, and all summands that contain factors $a_{j,\ell}$ in \eqref{eq:adj_var_tau1} on the right side. We see that the two adjoint equations given in \eqref{eq:adj_var_tau0} and \eqref{eq:adj_var_tau1} differ 
in the following term
\begin{equation*}
	\sum_{j=\ell+1}^{L-2}b_{j,\ell} ((P_{\ell}^{j+1})^{\top}\phi^{[j+1]} - (P_{\ell}^{j})^{\top}\phi^{[j]} )
	 \neq 
	 \sum_{j=\ell+1}^{L-2}a_{j,\ell} {(P_{\ell}^{j+1})^{\top}}\phi^{[j+1]} - \sum_{j=\ell+1}^{L-1}a_{j-1,\ell-1} {(P_{\ell}^{j})^{\top}}\phi^{[j]} 
\end{equation*}
In our computations, we have implemented the discretize-then-optimize approach, i.e. \eqref{eq:adj_var_tau1}.
Finally, we compute the derivative with respect to $\theta$:
\begin{align*}
	\partial_{W^{[L-1]}} \mathcal{L} &= \phi^{[L]} (y^{[L-1]})^{\top} + \partial_{W^{[L-1]}} J (\theta), \\
	\partial_{W^{[\ell]}} \mathcal{L}     &= y^{[\ell]} \left( \phi^{[\ell+1]} \odot (\tau^{[\ell]})^{\gamma} 
	c_{\gamma-1}^{-1}
	 \sigma' (W^{[\ell]} y^{[\ell]} +b^{[\ell]}) \right)^{\top} + \partial_{W^{[\ell]}} J(\theta),  && \ell=0,\ldots,L-2, \\
	\partial_{b^{[\ell]}} \mathcal{L}      &=  (\phi^{[\ell+1]})^{\top} (\tau^{[\ell]})^\gamma 
	c_{\gamma-1}^{-1}
	\sigma' \left( W^{[\ell]} y^{[\ell]} + b^{[\ell]} \right) + \partial_{b^{[\ell]}} J(\theta),   && \ell=0,\ldots,L-2, \\
	\partial_{\tau^{[\ell]}} \mathcal{L}  &=  -\sum_{k= \ell}^{L-2}  \sum_{j=0 }^{\min\{k-1,\ell\}} \partial_{\tau^{[\ell]}} (a_{k,j}) \left\langle {P_{j+1}^{k+1}}y^{[j+1]} - {P_{j}^{k+1}}y^{[j]}, \phi^{[k+1]} \right\rangle  \\ 
	&\quad+ \left\langle  \gamma (\tau^{[\ell]})^{\gamma-1}
	c_{\gamma-1}^{-1} 
	  \sigma (W^{[\ell]} y^{[\ell]} + b^{[\ell]}), \phi^{[\ell+1]} \right\rangle + \partial_{\tau^{[\ell]}} J(\theta), && \ell=0,\ldots,L-2 .
\end{align*}
Details on the computation of $	\partial_{\tau^{[\ell]}} \mathcal{L} $ can be found in Appendix \ref{appA2}.
Next, we examine the impact of variable $\tau$ onto the stability of networks such as ResNets and 
Fractional-DNNs.

\section{Vanishing and exploding gradients}
\label{sec:Gradients}

It is well known that optimization problems with DNN constraints can suffer from vanishing and exploding 
gradients, see e.g. \cite{bengio1994learning,glorot2010}. In this section, we analyze the structure of the derivatives
for several network architectures such as feedforward network, ResNet, DenseNet, Fractional-DNN, 
and the consequences of application of $\tau$-variable framework on these networks. We will identify 
various conditions to help overcome the aforementioned challenges.

For simplicity of the notation, we define the abbreviation $a^{[\ell]} :=\sigma(W^{[\ell]} y^{[\ell]} + b^{[\ell]})$ 
and omit the projection matrices $P_{\ell-1}^{\ell}$, i.e. $n_\ell = n$ for all layers $\ell$.
While the following result may not be new for the standard case with $\tau^{[\ell]} = \tau \in \R$ for all $\ell$, but to the best of our knowledge, this is new for variable $\tau^{[\ell]}$.
\begin{theorem}[Feedforward Network and ResNet]
	Consider the feedforward network and ResNet with $\tau$-variable framework
	\begin{equation*}
		\begin{aligned}
			y^{[\ell]} &= \tau^{[\ell -1]} a^{[\ell - 1]}, \qquad &&\ell = 1,\ldots,L-1,\\		
			y^{[\ell]} &= y^{[\ell-1]} + \tau^{[\ell-1]} a^{[\ell-1]},  \qquad &&\ell = 1,\ldots,L-1.
		\end{aligned}	
	\end{equation*}  
	Let $\theta^{[k]} = (W^{[k]}(:), b^{[k]}, \tau^{[k]})^{\top}$ be the parameters associated with layer $k$ for $k=0,\ldots,L-2$.  Then the respective derivatives take the form
	\begin{align}
		{\rm d}_{\theta^{[j]}}y^{[\ell]} &=  \prod_{i=\ell -1}^{j+1} \left( \tau^{[i]} {\rm d}_{y^{[i]}} a^{[i]} \right) \partial_{\theta^{[j]}} ( \tau^{[j]}  a^{[j]}), \label{eq:thetadiffvanilla} \\
		{\rm d}_{\theta^{[j]}}y^{[\ell]} &=  \prod_{i=\ell -1}^{j+1 }\left( \mathbb{I}+ \tau^{[i]}  {\rm d}_{y^{[i]}} a^{[i]}  \right) \partial_{\theta^{[j]}} (\tau^{[j]} a^{[j]} ) , \label{eq:thetadiffResNet}
	\end{align}
	for all $\ell = 1, \ldots,L-1$ and $j=0,\ldots,\ell-1$.
\end{theorem}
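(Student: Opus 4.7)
The plan is to prove both identities by induction on $\ell$, with $j$ held fixed, exploiting the layerwise recursion and the chain rule. The key observation I would record at the start is that the parameters $\theta^{[j]}=(W^{[j]}(:),b^{[j]},\tau^{[j]})^\top$ enter the forward pass \emph{only} through layer $j+1$; in every subsequent layer $i>j$ the dependence on $\theta^{[j]}$ is purely via $y^{[i-1]}$, because $\theta^{[j]}$ appears nowhere in the symbolic definition of $a^{[i-1]}=\sigma(W^{[i-1]}y^{[i-1]}+b^{[i-1]})$ or of $\tau^{[i-1]}$ when $i-1\ne j$. This is what will let the chain rule produce a clean telescoping product.

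The base case is $\ell=j+1$. For the feedforward recursion $y^{[j+1]}=\tau^{[j]}a^{[j]}$, differentiating directly yields $d_{\theta^{[j]}}y^{[j+1]}=\partial_{\theta^{[j]}}(\tau^{[j]}a^{[j]})$; for ResNet, $y^{[j+1]}=y^{[j]}+\tau^{[j]}a^{[j]}$ and $y^{[j]}$ does not depend on $\theta^{[j]}$, so the same expression arises. In both \eqref{eq:thetadiffvanilla} and \eqref{eq:thetadiffResNet} the product over $i$ from $\ell-1=j$ down to $j+1$ is empty and hence equals the identity matrix, matching the base case.

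For the inductive step with $\ell\ge j+2$, I would apply the chain rule to the recursion. In the feedforward case,
\begin{equation*}
d_{\theta^{[j]}}y^{[\ell]}
=\tau^{[\ell-1]}\,d_{y^{[\ell-1]}}a^{[\ell-1]}\cdot d_{\theta^{[j]}}y^{[\ell-1]},
\end{equation*}
since $\theta^{[j]}$ does not appear explicitly at layer $\ell-1$. Substituting the induction hypothesis for $d_{\theta^{[j]}}y^{[\ell-1]}$ and prepending the new factor $\tau^{[\ell-1]}d_{y^{[\ell-1]}}a^{[\ell-1]}$ to the product (on the left, since matrix multiplication is non-commutative) extends the product up to $i=\ell-1$, which is exactly \eqref{eq:thetadiffvanilla}. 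For ResNet, the recursion $y^{[\ell]}=y^{[\ell-1]}+\tau^{[\ell-1]}a^{[\ell-1]}$ gives
\begin{equation*}
d_{\theta^{[j]}}y^{[\ell]}
=\bigl(\mathbb{I}+\tau^{[\ell-1]}d_{y^{[\ell-1]}}a^{[\ell-1]}\bigr)\,d_{\theta^{[j]}}y^{[\ell-1]},
\end{equation*}
and the same inductive substitution produces \eqref{eq:thetadiffResNet}.

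There is no real analytic obstacle here; the only point requiring care is bookkeeping. Specifically, I would verify that the product is written in the correct non-commutative order (descending index $i$ from $\ell-1$ down to $j+1$, applied on the left of $\partial_{\theta^{[j]}}(\tau^{[j]}a^{[j]})$), that the empty-product convention correctly recovers the base case, and that when $j<\ell-1$ the extra partial derivative $\partial_{\tau^{[\ell-1]}}y^{[\ell]}$ which one might naively expect does not contribute, because $\tau^{[\ell-1]}$ is functionally independent of $\theta^{[j]}$. Once those indexing conventions are pinned down, the induction goes through uniformly for both architectures and for all three components of $\theta^{[j]}$.
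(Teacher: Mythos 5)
Your proof is correct and follows essentially the same route as the paper: the paper also applies the chain rule to the layer recursion (noting that $\theta^{[j]}$ enters only at layer $j+1$, so ${\rm d}_{\theta^{[j]}}y^{[j]}=0$) and then unrolls the resulting one-step relation into the telescoping product, which is just your induction phrased as iteration. Your explicit treatment of the base case, the empty-product convention, and the non-commutative ordering is a slightly more careful write-up of the same argument.
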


\begin{proof}
	For the feedforward neural network we can compute with chain rule
	\begin{align*}
		{\rm d}_{\theta^{[j]}}y^{[\ell]} &= {\rm d}_{\theta^{[j]}} \tau^{[\ell -1]} a^{[\ell-1]} \\
		&= \tau^{[\ell -1]} {\rm d}_{y^{[\ell-1]}} a^{[\ell-1]} \cdot {\rm d}_{\theta^{[j]}} y^{[\ell-1]} \\
		&= \tau^{[\ell -1]} {\rm d}_{y^{[\ell-1]}} a^{[\ell-1]} \cdot {\rm d}_{\theta^{[j]}} \tau^{[\ell -2]} a^{[\ell-2]}.
	\end{align*}
	where $\cdot$ denotes the standard matrix multiplication.
	By iterating we arrive at
	\begin{equation*}
		{\rm d}_{\theta^{[j]}}y^{[\ell]} = \prod_{i=\ell-1}^{j+1}\left(  \tau^{[i]}{\rm d}_{y^{[i]}}a^{[i]} \right) {\rm d}_{\theta^{[j]}} y^{[j+1]} =\prod_{i=\ell-1}^{j+1} \left(  \tau^{[i]} {\rm d}_{y^{[i]}}a^{[i]} \right)  \partial_{\theta^{[j]}} (\tau^{[j]}  a^{[j]}) .
	\end{equation*}
	Similarly, for the ResNet, we obtain
	\begin{align*}
		{\rm d}_{\theta^{[j]}}y^{[\ell]} &= {\rm d}_{\theta^{[j]}} ( y^{[\ell-1]} + \tau^{[\ell-1]} a^{[\ell - 1]}) \\
		&={\rm d}_{\theta^{[j]}} y^{[\ell-1]} +  \tau^{[\ell-1]} {\rm d}_{y^{[\ell-1]}} a^{[\ell - 1]} \cdot {\rm d}_{\theta^{[j]}} y^{[\ell-1]}  \\
		&= (\mathbb{I}+ \tau^{[\ell-1]} {\rm d}_{y^{[\ell-1]}} a^{[\ell - 1]} ) \,  {\rm d}_{\theta^{[j]}} y^{[\ell-1]},
	\end{align*}
where $\mathbb{I} \in \mathbb{R}^{n \times n}$ denotes the identity matrix, with $n = n_\ell$ constant throughout all layers $\ell$. 
	By iterating we arrive at 
	\begin{equation*}
		{\rm d}_{\theta^{[j]}}y^{[\ell]} =  \prod_{i=\ell-1}^{j+1 }\left( \mathbb{I}+ \tau^{[i]} {\rm d}_{y^{[i]}} a^{[i]}  \right) {\rm d}_{\theta^{[j]}} y^{[j+1]}  =  \prod_{i=\ell-1}^{j+1 } \left(\mathbb{I}+ \tau^{[i]} {\rm d}_{y^{[i]}} a^{[i]}    \right) \partial_{\theta^{[j]}} (\tau^{[j]} a^{[j]}) ,
	\end{equation*}
	where we use that ${\rm d}_{\theta^{[j]}}  y^{[j]} =0.$ This concludes the proof.
\end{proof}

\begin{remark}
	Since $\sigma$ is applied componentwise, special caution needs to be exercised when deriving ${\rm d}_{y^{[i]}} a^{[i]}$. Let $r_j^{[i]}$ be the $j$th row of $W^{[i]} y^{[i]} +b^{[i]}$, namely $\sum_{m=1}^{n} W_{j,m}^{[i]} y_m^{[i]} + b_j^{[i]}$ for $j \in \{1,\ldots,n \}$. Then, with a slight abuse of notation, it holds
	\begin{align*}
		{\rm d}_{y^{[i]}} a^{[i]} = {\rm d}_{y^{[i]}} \sigma(W^{[i]} y^{[i]} + b^{[i]}) 
		&= {\rm d}_{y^{[i]}} \left( \sigma(r_j^{[i]}) \right)_{j=1}^{n} = \diag(\sigma'(r_1^{[i]}), \ldots,  \sigma'(r_{n}^{[i]})  ) \cdot W^{[i]},
	\end{align*}
	where $\sigma'(r_j^{[i]})$ is the one-dimensional derivative of $\sigma$ at $r_j^{[i]}$.
	Furthermore, for the partial derivative $\partial_{\theta^{[j]}} (\tau^{[j]} a^{[j]})$, we recall $\theta^{[j]} = (W^{[j]}(:), b^{[j]}, \tau^{[j]})^{\top} \in \mathbb{R}^N$, with $N = n^2 + n + 1$ and the fact that $a^{[j]}$ depends on $W^{[j]}$ and $b^{[j]}$, but not $\tau^{[j]}$. Consequently, we see
	\begin{equation*}
		\partial_{\theta^{[j]}} (\tau^{[j]} a^{[j]}) = \begin{pmatrix}
			\tau^{[j]} \partial_{W^{[j]}(:)} a^{[j]} &
			\tau^{[j]} \partial_{b^{[j]}} a^{[j]} &
			a^{[j]}
		\end{pmatrix} \in \mathbb{R}^{n \times N}.
	\end{equation*}
\end{remark}
As pointed out above, the standard feedforward neural network, where $\tau^{[\ell]}=1$ for all $\ell$, 
can suffer from vanishing and exploding gradients, which can be a challenge for optimization with deep networks 
\cite{bengio1994learning,glorot2010}. 
Consider the structure of the derivatives in  \eqref{eq:thetadiffvanilla} with $\tau^{[\ell]}=1$ for all $\ell$, 
e.g. for the final hidden layer with $\ell = L-1$, 
\begin{equation*}
	{\rm d}_{\theta^{[j]}}y^{[L-1]} =  \prod_{i=L-2}^{j+1} \left({\rm d}_{y^{[i]}}a^{[i]} \right)\partial_{\theta^{[j]}} a^{[j]}.
\end{equation*}
Especially in the one-dimensional case, it is obvious that if the partial derivatives ${\rm d}_{y^{[i]}}a^{[i]}$ are smaller than 1, the product will tend to 0 as the number of layers $L$ increases, which leads to vanishing gradients. On the other hand, if the partial derivatives ${\rm d}_{y^{[i]}}a^{[i]}$ are larger than 1, the product will tend to $\infty$ as the number of layers $L$ increases, which leads to exploding gradients. The feedforward neural network with variable~$\tau$, can potentially help overcome both problems, since now we have flexibility with respect to $\tau^{[\ell]}$. But one needs to be careful as if the gradient components are really small, then $\tau^{[\ell]}$ needs to be really large to compensate, which could lead to ill-conditioning issues.

A more appropriate approach is the standard ResNet with $\tau^{[\ell]}=\tau \in \R$ for all $\ell$. It is known to be stable with respect to vanishing gradients. Recalling the gradient from  \eqref{eq:thetadiffResNet} 
\begin{equation*}
	{\rm d}_{\theta^{[j]}}y^{[L-1]} =  \prod_{i=L-2}^{j+1 } \left(\mathbb{I} + \tau^{[i]} {\rm d}_{y^{[i]}}a^{[i]}   \right) \partial_{\theta^{[j]}} (\tau^{[j]}a^{[j]})  ,
\end{equation*}
it becomes clear that this stability is achieved by the added identity $\mathbb{I}$ in every part of the product. Hence, even if the Jacobians ${\rm d}_{y^{[i]}}a^{[i]}$ vanish, the product still contains the identity matrices. This advantage carries over to the $\tau$-variable framework.

Furthermore, the introduction of $\tau^{[\ell]}$ in ResNet allows us to tackle the exploding gradients problem. This property has also been discussed, using probabilistic bounds, in \cite{hayou2021stable}. Our approach is determinstic. The standard ResNet architecture does not have this property. Appropriate small $\tau^{[\ell]}$ can prevent the product from exploding with growing number of layers. However, choosing $\tau^{[\ell]}$ too small may lead to vanishing gradient problem again, as we will illustrate in the following simple example. Recall that, we do not tune $\tau^{[\ell]}$ by hand, but let the optimization find it.

\begin{example} \label{ex:ResNet}
	Consider the ResNet architecture with variable $\tau$ in one dimenstion, i.e. one node per layer. We have 
	\begin{align*}
		{\rm d}_{\theta^{[1]}} y^{[2]} &= \partial_{\theta^{[1]}} (\tau^{[1]} a^{[1]}), \\
		{\rm d}_{\theta^{[0]}} y^{[2]} &=  (1+ \tau^{[1]} {\rm d}_{y^{[1]}}a^{[1]} )\,  \partial_{\theta^{[0]}} (\tau^{[0]}a^{[0]}).
	\end{align*}
	Assume that ${\rm d}_{y^{[1]}}a^{[1]} $ is large, so that it leads to a large ${\rm d}_{\theta^{[0]}} y^{[2]}$.  
	This problem can be overcome, if $\tau^{[1]}$ attains a small value. However, this may lead to ${\rm d}_{\theta^{[1]}} y^{[2]} $ being accordingly small in its first two components, i.e. the derivatives by the weights and biases. Consequently, fixing one potential exploding gradient problem, can cause another gradient to vanish.
	However, as emphasized earlier, we do not tune $\tau^{[\ell]}$ by hand, but let the optimization find optimal values.
\end{example}
We also analyze the respective derivatives in the DenseNet architecture with 
variable $\tau$.
Finding a closed form for ${\rm d}_{\theta^{[j]}} y^{[\ell]}$ is not so easy for this network architecture, but we can derive
a recursive relation in terms of lower order terms. 

\begin{theorem}\label{thm:dense}
	Consider the DenseNet with $\tau$-variable framework
	\begin{equation*}
		y^{[\ell]} = \sum_{k=0}^{\ell -1} y^{[k]} + \tau^{[\ell-1]} a^{[\ell - 1]}, \qquad \ell = 1, \ldots, L-1. 
	\end{equation*} 
	Then the derivatives can be recursively written as 
	\begin{align*}
		{\rm d}_{\theta^{[j]}} y^{[i]} &= {\rm d}_{\theta^{[j]}} \sum_{k=j+1}^{i-2} y^{[k]}+ (\mathbb{I} + \tau^{[i-1]} {\rm d}_{y^{[i-1]}} a^{[i-1]}) \, {\rm d}_{\theta^{[j]}} y^{[i -1]} \qquad i = \ell, \ldots, j+2, \\
		{\rm d}_{\theta^{[j]}} y^{[j+1]} &=  \partial_{\theta^{[j]}} (\tau^{[j]}a^{[j]}).
	\end{align*}
\end{theorem}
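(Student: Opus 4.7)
The plan is to apply the chain rule directly to the DenseNet recursion $y^{[i]} = \sum_{k=0}^{i-1} y^{[k]} + \tau^{[i-1]} a^{[i-1]}$ and then regroup terms. This mirrors the strategy already used for the feedforward and ResNet cases, except that the sum of all previous feature vectors prevents the derivative from collapsing into a clean product, which is why only a recursive formula (rather than a closed form) is being claimed.

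First, I would differentiate the defining relation at layer $i$ with respect to $\theta^{[j]}$, for any $j \le i-1$. Since $y^{[k]}$ depends only on the parameters $\theta^{[0]},\ldots,\theta^{[k-1]}$, every term with $k \le j$ contributes zero, so
\begin{equation*}
{\rm d}_{\theta^{[j]}} y^{[i]} = \sum_{k=j+1}^{i-1} {\rm d}_{\theta^{[j]}} y^{[k]} + \tau^{[i-1]}\,{\rm d}_{y^{[i-1]}} a^{[i-1]} \cdot {\rm d}_{\theta^{[j]}} y^{[i-1]},
\end{equation*}
where the second summand comes from the chain rule applied to $a^{[i-1]}=\sigma(W^{[i-1]}y^{[i-1]}+b^{[i-1]})$ and uses that for $j \le i-2$ the parameters in $\tau^{[i-1]}, W^{[i-1]}, b^{[i-1]}$ are independent of $\theta^{[j]}$.

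Next, I would split the $k=i-1$ term out of the sum and combine it with the chain-rule term to produce the identity-plus-Jacobian factor:
\begin{equation*}
{\rm d}_{\theta^{[j]}} y^{[i]} = {\rm d}_{\theta^{[j]}}\sum_{k=j+1}^{i-2} y^{[k]} + \bigl(\mathbb{I}+\tau^{[i-1]} {\rm d}_{y^{[i-1]}} a^{[i-1]}\bigr)\,{\rm d}_{\theta^{[j]}} y^{[i-1]},
\end{equation*}
which is the claimed recursion, valid for $i=\ell,\ldots,j+2$ (when $i=j+2$ the remaining sum is empty by convention). Finally, for the base case $i=j+1$, the recursion $y^{[j+1]} = \sum_{k=0}^{j} y^{[k]} + \tau^{[j]} a^{[j]}$ contains only $y^{[k]}$ for $k \le j$, none of which depend on $\theta^{[j]}$, so the first sum differentiates to zero and only the partial derivative of $\tau^{[j]} a^{[j]}$ survives, giving ${\rm d}_{\theta^{[j]}} y^{[j+1]} = \partial_{\theta^{[j]}}(\tau^{[j]} a^{[j]})$.

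I do not anticipate a genuine obstacle here; the proof is essentially bookkeeping once one correctly identifies which feature vectors $y^{[k]}$ depend on $\theta^{[j]}$. The one subtlety worth flagging is the treatment of the endpoint $i=j+2$, where the residual sum $\sum_{k=j+1}^{i-2}$ is empty, so that the formula degenerates correctly to ${\rm d}_{\theta^{[j]}} y^{[j+2]} = (\mathbb{I}+\tau^{[j+1]}{\rm d}_{y^{[j+1]}}a^{[j+1]}) \partial_{\theta^{[j]}}(\tau^{[j]} a^{[j]})$, matching the ResNet-style leading factor.
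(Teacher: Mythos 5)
Your proposal is correct and follows essentially the same route as the paper's proof: differentiate the DenseNet recursion, discard the terms $y^{[k]}$ with $k\le j$ since they do not depend on $\theta^{[j]}$, peel off the $k=i-1$ summand to form the factor $\bigl(\mathbb{I}+\tau^{[i-1]}{\rm d}_{y^{[i-1]}}a^{[i-1]}\bigr)$, and handle $i=j+1$ as the base case where only $\partial_{\theta^{[j]}}(\tau^{[j]}a^{[j]})$ survives. Your remark about the empty sum at $i=j+2$ is a harmless extra observation not made explicit in the paper.
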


\begin{proof}
	For $i=\ell,\ldots,j+2$ we employ the chain rule of differentiation and use that ${\rm d}_{\theta^{[j]}} y^{[k]} =0$ for $k < j+1$ to 
	arrive at
	\begin{align*}
		{\rm d}_{\theta^{[j]}} y^{[i]} &= {\rm d}_{\theta^{[j]}} \left( \sum_{k=0}^{i -1} y^{[k]} + \tau^{[ i-1 ]} a^{[i-1]} \right) \\
		&= {\rm d}_{\theta^{[j]}}\sum_{k=j+1}^{i -2} y^{[k]} +  {\rm d}_{\theta^{[j]}} y^{[i-1]} + \tau^{[i-1]} {\rm d}_{\theta^{[j]}} a^{[i-1]} \\
		&={\rm d}_{\theta^{[j]}} \sum_{k=j+1}^{i-2} y^{[k]}+ (\mathbb{I} + \tau^{[ i-1]} {\rm d}_{y^{[i-1]}} a^{[i-1]}) \, {\rm d}_{\theta^{[j]}} y^{[i -1]}.
	\end{align*}
	The case $i=j+1$ is special, since the chain rule does not need to be applied here. It simply holds 
	\begin{equation*}
		{\rm d}_{\theta^{[j]}} y^{[j+1]} =  {\rm d}_{\theta^{[j]}} \left( \sum_{k=0}^{j} y^{[k]} + \tau^{[j]} a^{[j]} \right) = 
		\partial_{\theta^{[j]}} (\tau^{[j]}a^{[j]}),
	\end{equation*}
	because $k < j+1$. The proof is complete.
\end{proof}

\begin{remark}
	In Theorem~\ref{thm:dense} we can successively insert the expressions for the lower order terms in the higher order terms, so that finally ${\rm d}_{\theta^{[j]}} y^{[\ell]}$ depends only on $\partial_{\theta^{[j]}} (\tau^{[j]}a^{[j]})$ and ${\rm d}_{y^{[i]}} a^{[i]}$ for $i=j+1,\ldots,\ell-1$. Furthermore, we see that every next lower order term enters with a factor $(\mathbb{I}+ \tau^{[i]} {\rm d}_{y^{[i]}} a^{[i]} )$, so that one can 
	overcome the vanishing gradients problem in a DenseNet (both fixed and variable $\tau$ cases). 
	To discuss the exploding gradients problem we consider for example the derivative ${\rm d}_{\theta^{[j]}} y^{[L-1]}$, 
	where one summand will be 
	$ \prod_{i=j+1}^{L-2} \left(\tau^{[i]} {\rm d}_{y^{[i]}} a^{[i]} \right) \partial_{\theta^{[j]}} (\tau^{[j]}a^{[j]}).$
	In the standard one-dimensional DenseNet architecture with $\tau^{[\ell]} =1$ for all $\ell$, we see that the above product tends to $\infty$ with 
	a growing number of layers $L$ if ${\rm d}_{y^{[i]}} a^{[i]} >1$ for all $i$. 
	The $\tau$-variable architecture can help deal with this problem, see also Example \ref{ex:Gradients}.
\end{remark}
 Similarly to the above cases, we can express derivatives of Fractional-DNN architecture, 
with variable $\tau$, in terms of lower order terms.
\begin{theorem}
	Consider the Fractional-DNN with $\tau$-variable framework
	\begin{equation*}
		y^{[\ell]} = y^{[\ell - 1]}  - \sum_{k=0}^{\ell-2} a_{\ell,k} (y^{[k+1]} - y^{[k]}) + (\tau^{[\ell -1]})^\gamma
		c_{\gamma-1}^{-1}
		a^{[\ell - 1]}, \qquad \ell = 1, \ldots, L-1 .
	\end{equation*}
	Then the derivatives can be recursively written as 
	\begin{align*}
		{\rm d}_{\theta^{[j]}} y^{[i]} &= {\rm d}_{\theta^{[j]}} \sum_{k=j+1}^{i-2} (a_{i,k} - a_{i,k-1}) y^{[k]} + \left( (1- a_{i,i-2})\mathbb{I} + (\tau^{[i-1]})^\gamma 
		c_{\gamma-1}^{-1}
		\, {\rm d}_{y^{[i-1]}}a^{[i-1]} \right) \, {\rm d}_{\theta^{[j]}} y^{[i -1]}, \\
		&\qquad\qquad\qquad\qquad  i = \ell, \ldots, j+2, \\
		{\rm d}_{\theta^{[j]}} y^{[j+1]} &=
		c_{\gamma-1}^{-1}  
		\,\partial_{\theta^{[j]}} ( (\tau^{[j]})^\gamma a^{[j]}).
	\end{align*}
\end{theorem}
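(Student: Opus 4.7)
The plan is to mimic the DenseNet proof (Theorem~\ref{thm:dense}), directly differentiating the state equation layer by layer and using Abel summation by parts to convert a telescoping difference of $y^{[k]}$'s into a weighted sum of $y^{[k]}$'s with coefficients $a_{i,k}-a_{i,k-1}$.

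First, I would handle the base case $i=j+1$. Writing out
\begin{equation*}
y^{[j+1]} = y^{[j]} - \sum_{k=0}^{j-1} a_{j+1,k}\bigl(y^{[k+1]}-y^{[k]}\bigr) + (\tau^{[j]})^{\gamma}c_{\gamma-1}^{-1} a^{[j]},
\end{equation*}
every feature vector $y^{[0]},\dots,y^{[j]}$ is built only from $\theta^{[0]},\dots,\theta^{[j-1]}$, so $\mathrm{d}_{\theta^{[j]}} y^{[k]}=0$ for $k\le j$. Hence the first two terms drop out and only $c_{\gamma-1}^{-1}\,\partial_{\theta^{[j]}}\!\bigl((\tau^{[j]})^{\gamma}a^{[j]}\bigr)$ survives, matching the claim.

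For the recursive case $i=\ell,\dots,j+2$, I would first perform the summation-by-parts identity. Splitting and re-indexing,
\begin{equation*}
\sum_{k=0}^{i-2} a_{i,k}\bigl(y^{[k+1]}-y^{[k]}\bigr) = a_{i,i-2}\,y^{[i-1]} - a_{i,0}\,y^{[0]} + \sum_{k=1}^{i-2}\bigl(a_{i,k-1}-a_{i,k}\bigr)y^{[k]}.
\end{equation*}
Substituting back into the state equation gives
\begin{equation*}
y^{[i]} = (1-a_{i,i-2})\,y^{[i-1]} + \sum_{k=1}^{i-2}\bigl(a_{i,k}-a_{i,k-1}\bigr)y^{[k]} + a_{i,0}\,y^{[0]} + (\tau^{[i-1]})^{\gamma}c_{\gamma-1}^{-1}\,a^{[i-1]}.
\end{equation*}
Now I apply $\mathrm{d}_{\theta^{[j]}}$ and use $\mathrm{d}_{\theta^{[j]}} y^{[k]}=0$ for $k\le j$ (so $y^{[0]}$ and the summands with $k\le j$ drop), together with the chain rule $\mathrm{d}_{\theta^{[j]}} a^{[i-1]} = \mathrm{d}_{y^{[i-1]}} a^{[i-1]}\cdot\mathrm{d}_{\theta^{[j]}} y^{[i-1]}$, which is valid because $i-1>j$ implies $W^{[i-1]},b^{[i-1]}$ are independent of $\theta^{[j]}$. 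Collecting the $\mathrm{d}_{\theta^{[j]}} y^{[i-1]}$ contributions into the bracket $(1-a_{i,i-2})\mathbb{I} + (\tau^{[i-1]})^{\gamma}c_{\gamma-1}^{-1}\,\mathrm{d}_{y^{[i-1]}} a^{[i-1]}$ yields exactly the claimed recursion.

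The main obstacle is essentially bookkeeping: keeping track of how the index range shifts from $\{0,\dots,i-2\}$ down to $\{j+1,\dots,i-2\}$ after discarding the vanishing derivatives, and absorbing the boundary term $a_{i,i-2}\,y^{[i-1]}$ into the prefactor of $\mathrm{d}_{\theta^{[j]}} y^{[i-1]}$. A subtle but mostly cosmetic point is that the coefficients $a_{i,k}$ depend on $\tau^{[j]}$ through the formula displayed in Section~\ref{subsec:learningfrac}; the stated recursion implicitly treats them as constant, which is correct when $\theta^{[j]}$ stands for a weight or bias component and requires an additional $\tau$-sensitivity term (analogous to the one already computed in $\partial_{\tau^{[\ell]}}\mathcal{L}$ earlier in this section) when $\theta^{[j]}$ contains $\tau^{[j]}$.
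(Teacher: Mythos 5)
Your argument is correct and follows essentially the same route as the paper: the summation-by-parts rewriting of the state equation into the form $(1-a_{i,i-2})\,y^{[i-1]}+\sum_{k}(a_{i,k}-a_{i,k-1})\,y^{[k]}+a_{i,0}\,y^{[0]}+(\tau^{[i-1]})^{\gamma}c_{\gamma-1}^{-1}a^{[i-1]}$ is exactly the paper's first step, and the remainder is the same bookkeeping with $\mathrm{d}_{\theta^{[j]}}y^{[k]}=0$ for $k\le j$ plus the chain rule on $a^{[i-1]}$. Your closing caveat about the $\tau^{[j]}$-dependence of the coefficients $a_{i,k}$ is a fair observation that the paper's proof also passes over silently; it is immaterial when $\theta^{[j]}$ is restricted to weights and biases.
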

 \begin{proof}
	First of all, we rewrite the forward propagation for $\ell=1,\ldots,L-1$ in Fractional-DNN
	\begin{align*}
		y^{[\ell]} &= y^{[\ell - 1]}  - \sum_{k=0}^{\ell-2} a_{\ell,k} (y^{[k+1]} - y^{[k]}) + (\tau^{[\ell -1]})^\gamma
		c_{\gamma-1}^{-1} 
		a^{[\ell - 1]}  \\
		&= a_{\ell,0} y^{[0]} + \sum_{k=1}^{\ell-2} ( a_{\ell,k} - a_{\ell,k-1}) y^{[k]} 
		+(1- a_{\ell,\ell-2}) y^{[\ell -1]} 
		+ (\tau^{[\ell -1]})^\gamma
		c_{\gamma-1}^{-1}  
		a^{[\ell - 1]}.
	\end{align*}
	Then for $i=\ell,\ldots,j+2$ we use chain rule and ${\rm d}_{\theta^{[j]}} y^{[k]} =0$ for $k < j+1$ to obtain 
	\begin{align*}
		{\rm d}_{\theta^{[j]}} y^{[i]} &= {\rm d}_{\theta^{[j]}} \Big( a_{i,0} y^{[0]} + \sum_{k=1}^{i-2} ( a_{i,k} - a_{i,k-1}) y^{[k]} 
		+(1- a_{i,i-2}) y^{[i -1]} 
		+ (\tau^{[i -1]})^\gamma 
		c_{\gamma-1}^{-1}
		a^{[i-1]} \Big) \\
		&={\rm d}_{\theta^{[j]}} \sum_{k=j+1}^{i-2} (a_{i,k} - a_{i,k-1}) y^{[k]} + \left( (1- a_{i,i-2})\mathbb{I} + (\tau^{[i-1]})^\gamma
		c_{\gamma-1}^{-1}  
		\, {\rm d}_{y^{[i-1]}}a^{[i-1]} \right) \, {\rm d}_{\theta^{[j]}} y^{[i -1]}. 
	\end{align*}
	Finally, for $i=j+1$, we exploit again ${\rm d}_{\theta^{[j]}} y^{[k]} =0$ for $k < j+1$, and derive
	\begin{align*}
		{\rm d}_{\theta^{[j]}} y^{[j+1]} &= {\rm d}_{\theta^{[j]}} \Big( a_{j+1,0} y^{[0]} + \sum_{k=1}^{j-1} ( a_{j+1,k} - a_{j+1,k-1}) y^{[k]} 
		+(1- a_{j+1,j-1}) y^{[j]} 
		+ (\tau^{[j]})^\gamma 
		c_{\gamma-1}^{-1} 
		a^{[j]} \Big)
		\\
		&=
		c_{\gamma-1}^{-1}  
		\,\partial_{\theta^{[j]}} ((\tau^{[j]})^\gamma  a^{[j]}).
	\end{align*}
	This completes the proof.
\end{proof} 
\begin{remark}
	Again, the lower order term representations can be successively inserted into the higher 
	order terms until we arrive at ${\rm d}_{\theta^{[j]}} y^{[\ell]}$ depending only on 
	$\partial_{\theta^{[j]}} ((\tau^{[j]})^\gamma a^{[j]})$ and ${\rm d}_{y^{[i]}} a^{[i]}$ for $i=j+1,\ldots,\ell-1$. Here, the next lower 
	order term enters with a factor 
	$	\big( (1- a_{i,i-2})\mathbb{I} + (\tau^{[i-1]})^\gamma 
	c_{\gamma-1}^{-1}  
	\, {\rm d}_{y^{[i-1]}}a^{[i-1]} \big),$ 
	which allows us to overcome the vanishing gradient problem in Fractional-DNNs.
	Furthermore, the multiplication by $(\tau^{[i-1]})^\gamma$ in this factor can help 
	us to deal with exploding gradients. This is similar to ResNet with variable $\tau$. 
\end{remark}
\begin{example} \label{ex:Gradients}
	To get an idea of how different network architectures influence the derivatives, the derivative ${\rm d}_{\theta^{[0]}} y^{[3]}$ 
	is displayed here for the four different options that have been considered in this section, i.e., feedforward neural network, 
	ResNet, DenseNet and Fractional DNN:
	\begin{align*}
		{\rm d}_{\theta^{[0]}} y^{[3]} &= \tau^{[2]} {\rm d}_{y^{[2]}} a^{[2]} \cdot \tau^{[1]} {\rm d}_{y^{[1]}} a^{[1]} \cdot	\partial_{\theta^{[0]}} (\tau^{[0]} a^{[0]}) , \\
		{\rm d}_{\theta^{[0]}} y^{[3]} &= \left(  \mathbb{I} + \tau^{[1]} {\rm d}_{y^{[1]}} a^{[1]} +  \tau^{[2]} {\rm d}_{y^{[2]}} a^{[2]} + \tau^{[1]} \tau^{[2]} {\rm d}_{y^{[2]}} a^{[2]} \cdot {\rm d}_{y^{[1]}} a^{[1]}  \right) \, 	\partial_{\theta^{[0]}} (\tau^{[0]} a^{[0]}) , \\
		{\rm d}_{\theta^{[0]}} y^{[3]} &= \left(  2 \mathbb{I} + \tau^{[1]} {\rm d}_{y^{[1]}} a^{[1]} + \tau^{[2]} {\rm d}_{y^{[2]}} a^{[2]} + \tau^{[1]}\tau^{[2]} {\rm d}_{y^{[2]}} a^{[2]} \cdot {\rm d}_{y^{[1]}} a^{[1]}  \right) \, 	\partial_{\theta^{[0]}} (\tau^{[0]} a^{[0]})  , \\
		{\rm d}_{\theta^{[0]}} y^{[3]} &=\{ (1 - a_{2,0} - a_{3,0} + a_{2,0}a_{3,1})\mathbb{I} + (1-a_{3,1})   (\tau^{[1]})^\gamma 
		c_{\gamma-1}^{-1}  
		{\rm d}_{y^{[1]}} a^{[1]} 
		\\
		&\quad + (1-a_{2,0}) (\tau^{[2]})^\gamma 
		c_{\gamma-1}^{-1} 
		{\rm d}_{y^{[2]}} a^{[2]} + (\tau^{[1]})^\gamma (\tau^{[2]})^\gamma 
		c_{\gamma-1}^{-2} 
		{\rm d}_{y^{[2]}} a^{[2]}  \cdot {\rm d}_{y^{[1]}} a^{[1]}    \} \; 
		c_{\gamma-1}^{-1} 
		\partial_{\theta^{[0]}} ( (\tau^{[0]})^\gamma  a^{[0]}). \\
	\end{align*}
\end{example}
In conclusion, ResNet, DenseNet and Fractional-DNN have a visible additive structure in the derivatives, which helps with the vanishing gradients problem. Furthermore, the parameters $\tau^{[\ell]}$ can help overcome both vanishing and exploding gradients.

\section{Numerical results} \label{sec:Num}

In this section, we apply the $\tau$-variable framework to a ResNet and a Fractional DNN
with and without bias ordering \eqref{eq:bias_order}. A thorough comparison is carried out in 
the context of an ill-posed 3D parametrized Maxwell's equation with Gauss's law. This 
problem is ill-posed because the standard N\'ed\'elec finite element is only curl conforming 
and cannot directly impose the Gauss's law.
%
In all the cases, we apply the smoothed version of standard 
$\text{ReLU}(y)= \max\{ 0,y \} $ as the activation function
\begin{equation*}
	\text{smoothReLU}(y) = \begin{cases}
		\max\{0,y \}, & \text{if } |y| > \eta \\
		\frac{1}{4 \eta} y^2 + 0.5 y + 0.25 \eta, &\text{if } \; y \in [-\eta,\eta] \, . 
	\end{cases}
\end{equation*}
We have found that $\eta = 10^{-4}$ is a robust
choice for the examples under consideration.
Notice, that one can also use other activation functions which can 
differ from layer to layer.

\subsection{Maxwell's equations} \label{subsec:Maxwells}

Our findings suggests that the $\tau$-variable framework outperforms the standard approach (with fixed $\tau$) for deeper networks,
see Figure \ref{fig:mse_6_50}. This is expected, since the 
effect of variable $\tau^{[\ell]}$ will 
be more prominent when more layers (and consequently more time-step parameters $\tau^{[\ell]}$) are present. On the other hand, for shallow networks, the $\tau$-variable
framework provides comparatively less improvements, see Figure~\ref{fig:c1}. Nevertheless, the
$\tau$-variable framework applied to ResNet yields error improvements compared to a standard ResNet for model extrapolation, see Figure \ref{fig:PWError}. These results
are also comparable to the approximation obtained with the finite element method (FEM) with  the lowest order N\'ed\'elec space, cf. Figure \ref{fig:d1}.

Consider the 
Maxwell-Dirac equations. 
Our goal is to learn $\bm u:\Omega\subset \R^3\mapsto \R^3$  that satisfies 
\begin{align}
\begin{aligned}
 \mathrm{curl}\left( \bm \mu^{-1} \mathrm{curl } \bm u\right)  &= \bm f  \quad \mbox{ in } \Omega,\\
 \mathrm{div} \!\left(\bm \varepsilon \bm u\right)&=\rho \quad \mbox{ in } \Omega,\\
 \bm u\times \bm n &= \bm g \quad \mbox{ on } \partial \Omega,
 \end{aligned} \label{def:StationaryMax}
\end{align}
where $\bm \mu $ and $\bm \varepsilon$ are positive definite symmetric tensors in $L^\infty(\Omega)^3 $, $\bm f\in L^2(\Omega)^3,$ $\rho \in L^2(\Omega)$ and  $\bm g \in H^{-\frac 12}_{||}(\mathrm{div}_\Gamma;\partial \Omega)$.
This problem is particularly dif{}ficult at the discrete level due to its divergence-related constraints and requires
 rather tailored algorithms to deal with it,  see for instance \cite{Ciarlet2014}.
Therefore, an interesting question is to approximate the map:
\begin{align*}
(\bm x,\bm f(\bm x),\bm \mu(\bm x),\rho(\bm x))  \mapsto \bm u(\bm x), 
\end{align*}
that can lead to a reasonable and noise-robust approximation of the solution $\bm u$ to \eqref{def:StationaryMax}, 
note that we ignore the boundary data $\bm g$. 
This approach is similar to a surrogate model where its output could be used as  an initial guess by an iterative method like the domain decomposition method or in the reduced basis method \cite{TTNQuyen_HAntil_HDiaz_2022a}. Nevertheless, those problems are beyond the scope of the present paper, and it will be studied in future works.

This learning problem is challenging because the solutions to \eqref{def:StationaryMax}
can have discontinuities, while most neural networks, except for the ones with the Heaviside activation function, lead to continuous approximations.
Also, it is still not clear how to incorporate the geometry (domain $\Omega$) of the problem in a meaningful way.
Thus, we consider an example with a known smooth solution and we compare it with an approximation obtained by various DNNs and by the lowest order N\'ed\'elec space of the f{}irst kind, cf. \cite{Nedelec1980}, denoted by $\mathcal{N}_0(\Omega)$.
 Here, we consider  the basis proposed in \cite{Jay2005}.  In order to do that, let us consider  $\bm \varepsilon=\mathbb{I}_{3\times 3}$,  and for a smooth $\varphi:\Omega\mapsto \R^+$ we def{}ine $ \bm \mu^{-1}(\bm x)= \varphi(\bm x) \mathbb{I}_{3\times 3},$  then
$
     \mathrm{curl}\left(   \bm \mu^{-1} \mathrm{curl} \bm u \right)=\nabla \varphi\times \mathrm{curl} \bm u  + \varphi \mathrm{curl}\left(\mathrm{curl} \bm u \right).
$

Thus, if we consider  
\begin{align}
\bm u: \Omega&\mapsto \R^3, \quad 
(x_1,x_2,x_3)\mapsto  I_1(r(x_1,x_2,x_3)) \bm e_\theta, \label{def:exact_u}\\
\varphi:\Omega&\mapsto \R, \quad (x_1,x_2,x_3)\mapsto \frac{1}{2}(x_1^2+x_2^2+1), \nonumber
\end{align}
 where $\Omega$ is the cylinder  $\{(x_1,x_2,x_3)\in \R^3\!: x_1^2+x_2^2 \leq 1 \mbox{ and } x_3\in [0,1] \}$, $I_\nu$ is the  modif{}ied Bessel functions of the f{}irst kind of order $\nu,$
 $r(x_1,x_2,x_3)= \sqrt{x_1^2+x_2^2}$, and $\bm e_\theta(x_1,x_2,x_3)=(x_1^2+x_2^2)^{-\frac 12}(-x_2,x_1,0),$ 
we obtain: 
 \begin{align*}
    \mathrm{curl} \bm u = I_0(r)\bm e_z, \quad 
     \mathrm{curl} \left( \mathrm{curl} \bm u \right)&=  - \bm u, \quad 
     \mathrm{div} \bm u = 0, \mbox{ and }\\
      \mathrm{curl}\left(   \bm \mu^{-1} \mathrm{curl} \bm u \right)&= -rI_0(r)\bm e_\theta-\varphi \bm u=:\bm f.
 \end{align*} 

Because $\bm u$ is divergence free, we consider the reduced map     
$(\bm x,\bm f(\bm x),\bm \varphi(\bm x))  \mapsto \bm u(\bm x),$
where $\bm x = (x_1,x_2,x_3),$ and $\bm u(\bm x)=(\bm u_1(\bm x),\bm u_2(\bm x),\bm u_3(\bm x)).$ 
In order to generate the input/output data for the DNNs, we consider points $\{\bm x_i\}_{i=1}^N\subset \Omega$ randomly chosen from  $\Omega$ obtained with Matlab's function $\mathrm{unifrnd}$ along with $\mathrm{philox}$  as the random number algorithm. Here, we set $N=12,000$. Then, $\{(\bm x_i,f(\bm x_i),\varphi(\bm x_i) ) \}_{i=1}^N $ and  $\{\bm u(\bm x_i)\}_{i=1}^N$ can be utilized as input/output data.
 We are now ready to train and compare several DNNs.

\subsection*{Neural Network size \& network reduction} 
The bigger the network that we use for training, the bigger the computational time and the memory requirements. We 
employ the $\tau$-variable framework and start with a ResNet architecture with 
5 hidden layers with 10 nodes each. As target functional we implement the mean squared error with no regularization, i.e. $\lambda_1 = \lambda_2 =0$. Additionally, we consider bias ordering (B.O.) with a fixed Moreau-Yosida parameter $\beta=10$. After 1000 steepest descent steps we observe the following result:
	The relative error in the Euclidean norm on the test set is 0.07, and we see $\tau^{[1]},\tau^{[3]}$ and $\tau^{[4]}$ are approximately 0. Recalling the ResNet structure with variable $\tau$, \eqref{eq:OVarRes}, it is obvious that e.g. from $\tau^{[1]}\approx 0$ we can deduce $y^{[2]} \approx P_1^2 y^{[1]}$. Consequently, we delete the hidden layers 2,4 and 5, cf. Figure \ref{fig:reduce}. The reduced network with 2 hidden layers achieves the same relative error on the test set, i.e. 0.07. 
\begin{figure}[h]
	\includegraphics[width=0.49\textwidth]{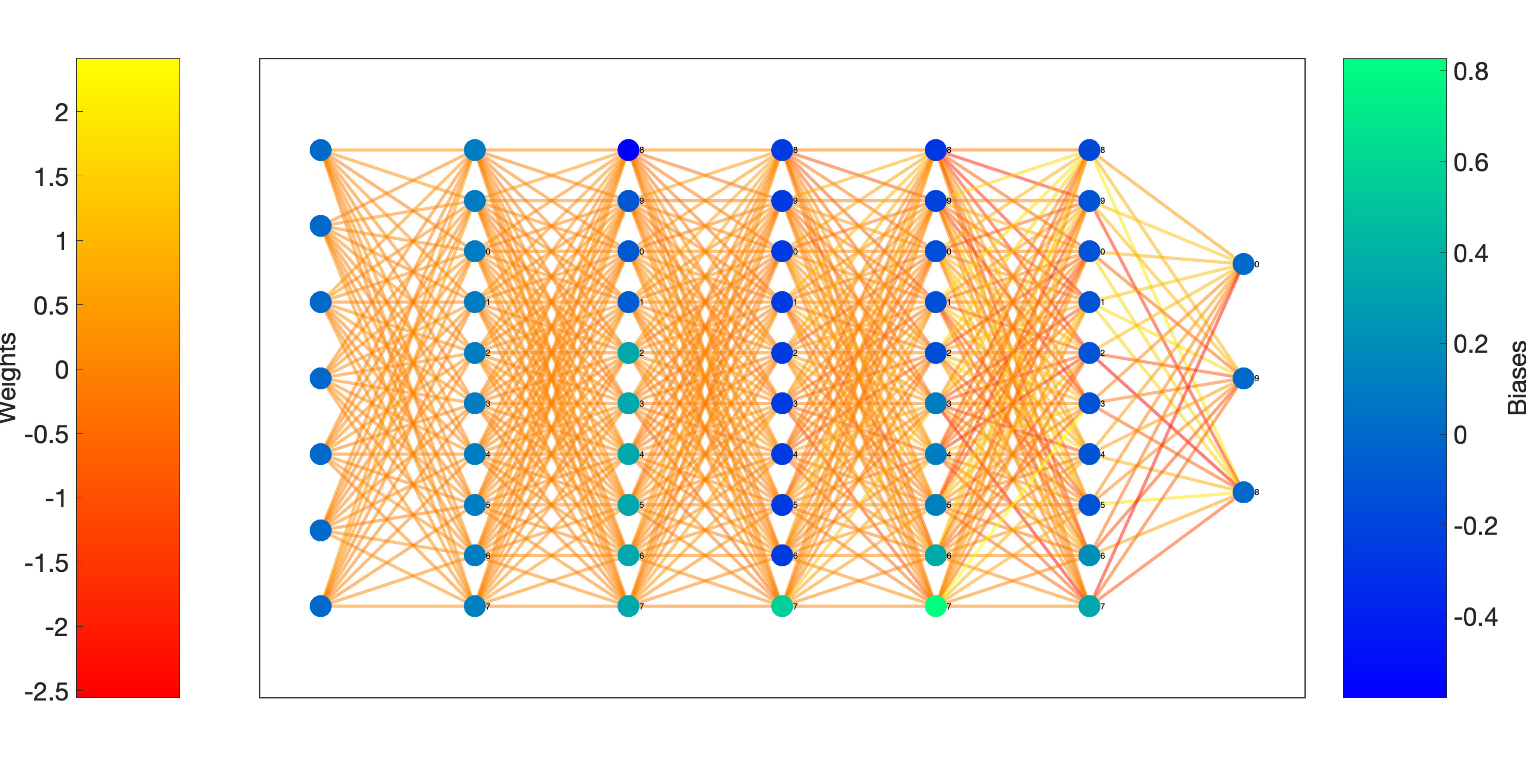} 
	\includegraphics[width=0.49\textwidth]{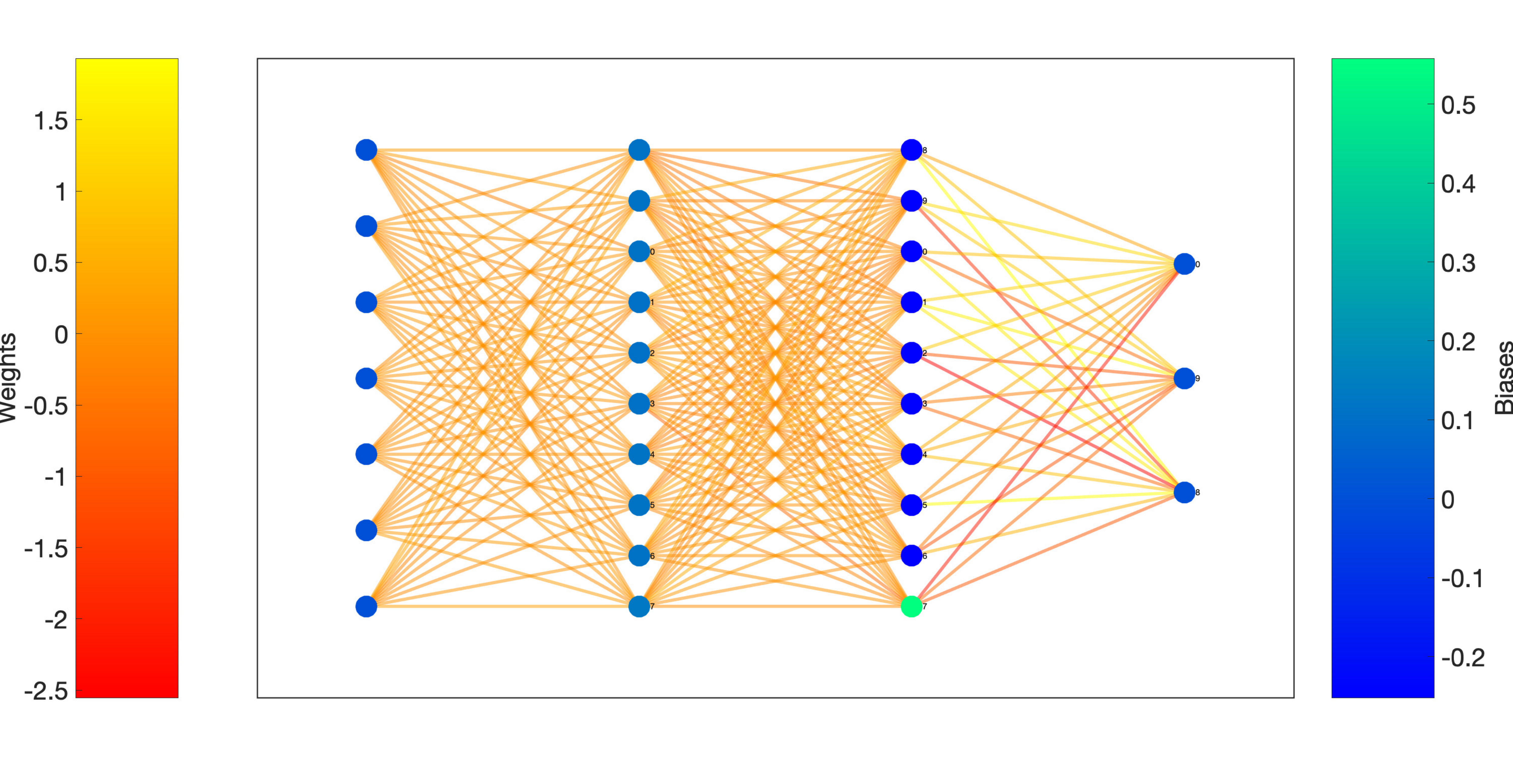}
	
	\caption{Left: Optimal weights and biases for ResNet with variable $\tau$ with 5 hidden layers and 10 nodes each with bias ordering. Right: Reduced ResNet with 2 hidden layers, i.e. hidden layers 1 and 3 from the larger network. The color of the dots indicates the bias value and the color of the lines indicates the magnitude of the weight.}
	
	\label{fig:reduce}
\end{figure}

While the same relative error is obtained with the reduced network, we aim at achieving even better results, therefore we
next consider a larger network size with 6 hidden layers and 50 nodes in each layer (6-50).
The proposed variable-$\tau$
approach seems to always outperform its constant $\tau$ counterpart, see 
Figure \ref{fig:mse_6_50}. As stated before, this is expected because the variable-$\tau$ framework and also Fractional-DNN have a bigger impact for deeper architectures with more hidden layers. Let us remark that the curves in Figure \ref{fig:mse_6_50} are not monotone because we have only plotted the mean squared error term. In case of the entire $J$ we do observe monotone behavior as expected. 
Even though we see in Table~\ref{tab:taus} that $\tau^{[\ell]} > 0$ for all $\ell$ in this setup, 
motivated by the reduction in the previous architecture of 5 hidden layers and 10 nodes per layer, instead of (6-50), we also consider a network with 2 hidden layers with 50 nodes per layer (2-50), which yields better results, cf. Figure \ref{fig:SumFEM}. 
\subsection*{Results: 6 layers-50 nodes vs. 2 layers-50 nodes } 
From now on, $\bm u^{N\!N}(\bm x)$ will  denote the approximation of $\bm u$ obtained with a neural network at a point $\bm x$, the specif{}ic architecture will be clear from the context. Note that, $\bm u^{N\!N}(\bm x)=(\bm u_1^{N\!N}(\bm x),\bm u_2^{N\!N}(\bm x),\bm u_3^{N\!N}(\bm x))\in \R^3.$  

\begin{figure}[h!]
\centering  
\subfigure[Fixed $\tau$ -- 6 layers and 50 nodes]{\label{fig:a1}\includegraphics[width=0.46\textwidth]{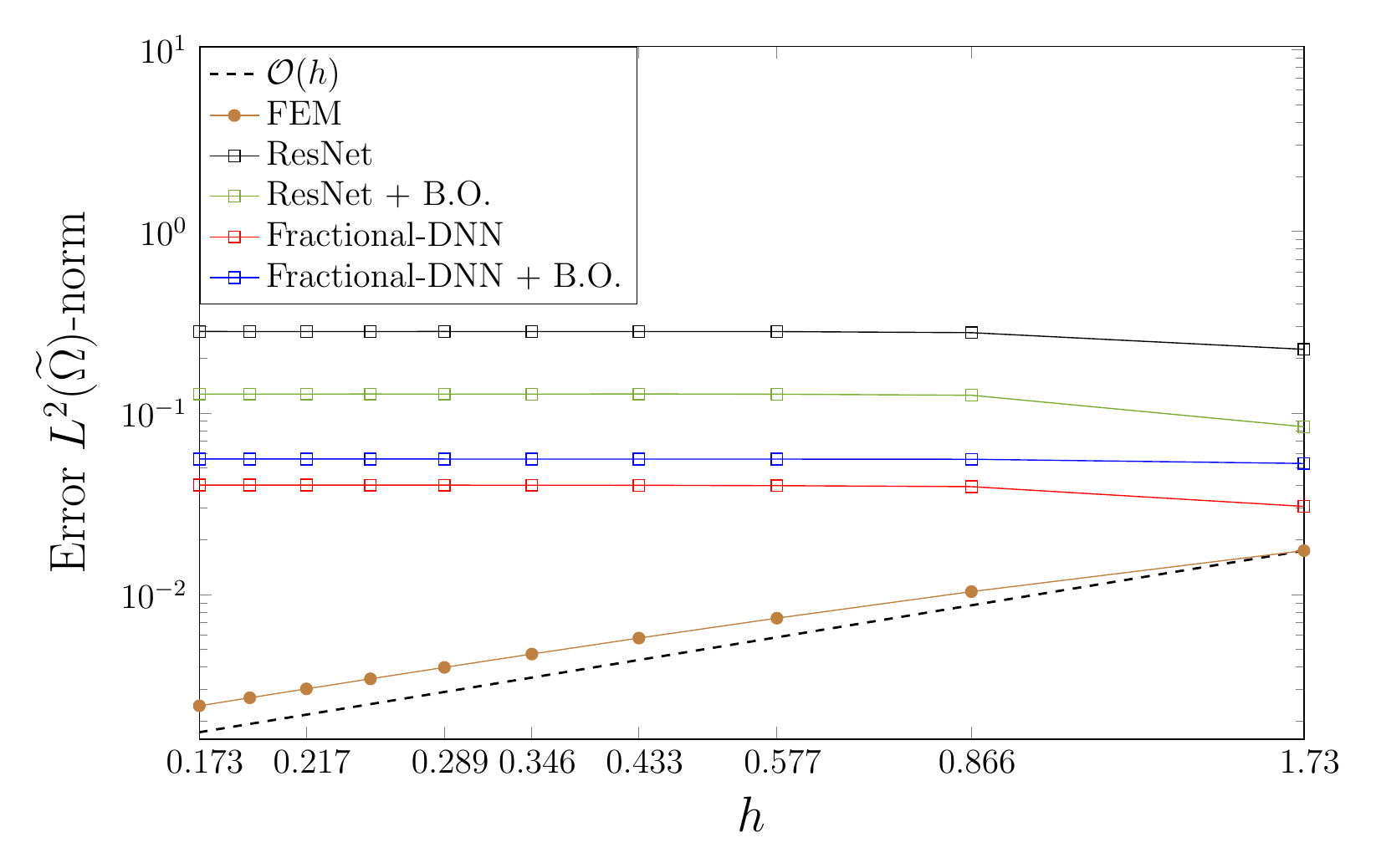}}
\subfigure[{ Variable $\tau$ -- 6 layers and 50 nodes}]{\label{fig:b1} \includegraphics[width=0.46\textwidth]{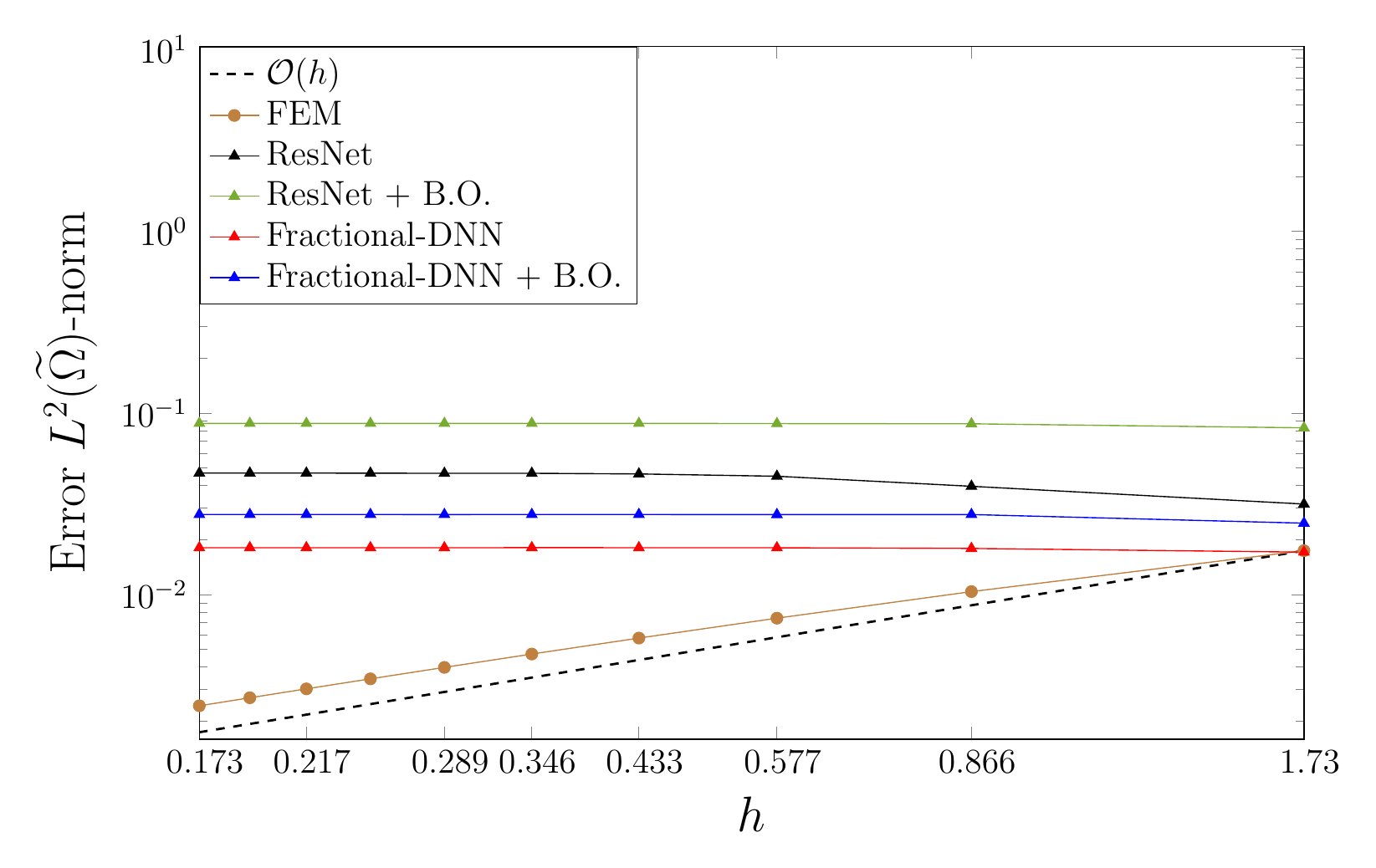} }

\hspace{-0.3cm}
\subfigure[MSE plot --  2 layers and 50 nodes]{\label{fig:c1}  \includegraphics[width=0.48\textwidth]{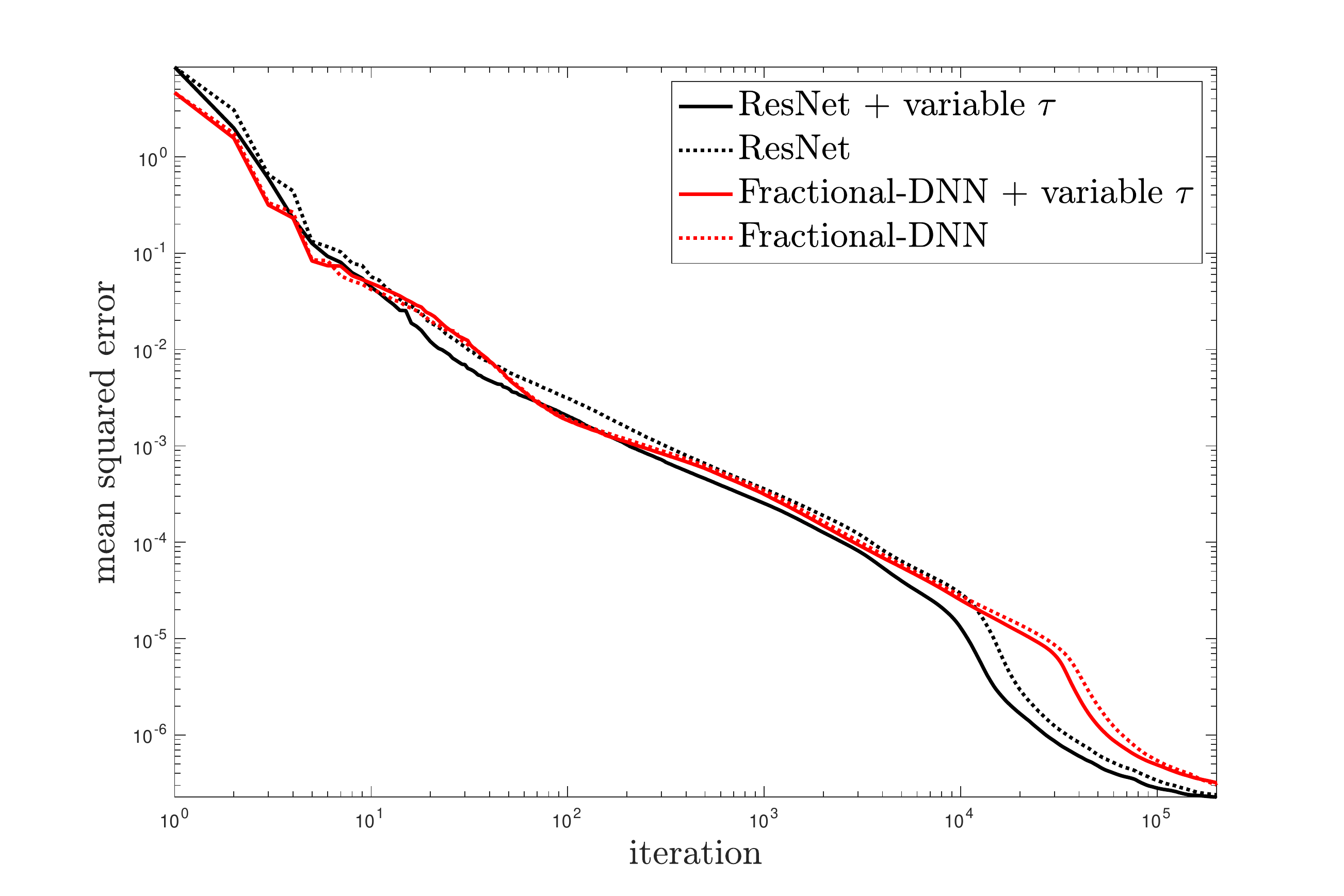} }   
\hspace{-0.75cm}
\subfigure[Comparison  -- 2 layers and 50 nodes]{\label{fig:d1} \includegraphics[width=0.46\textwidth]{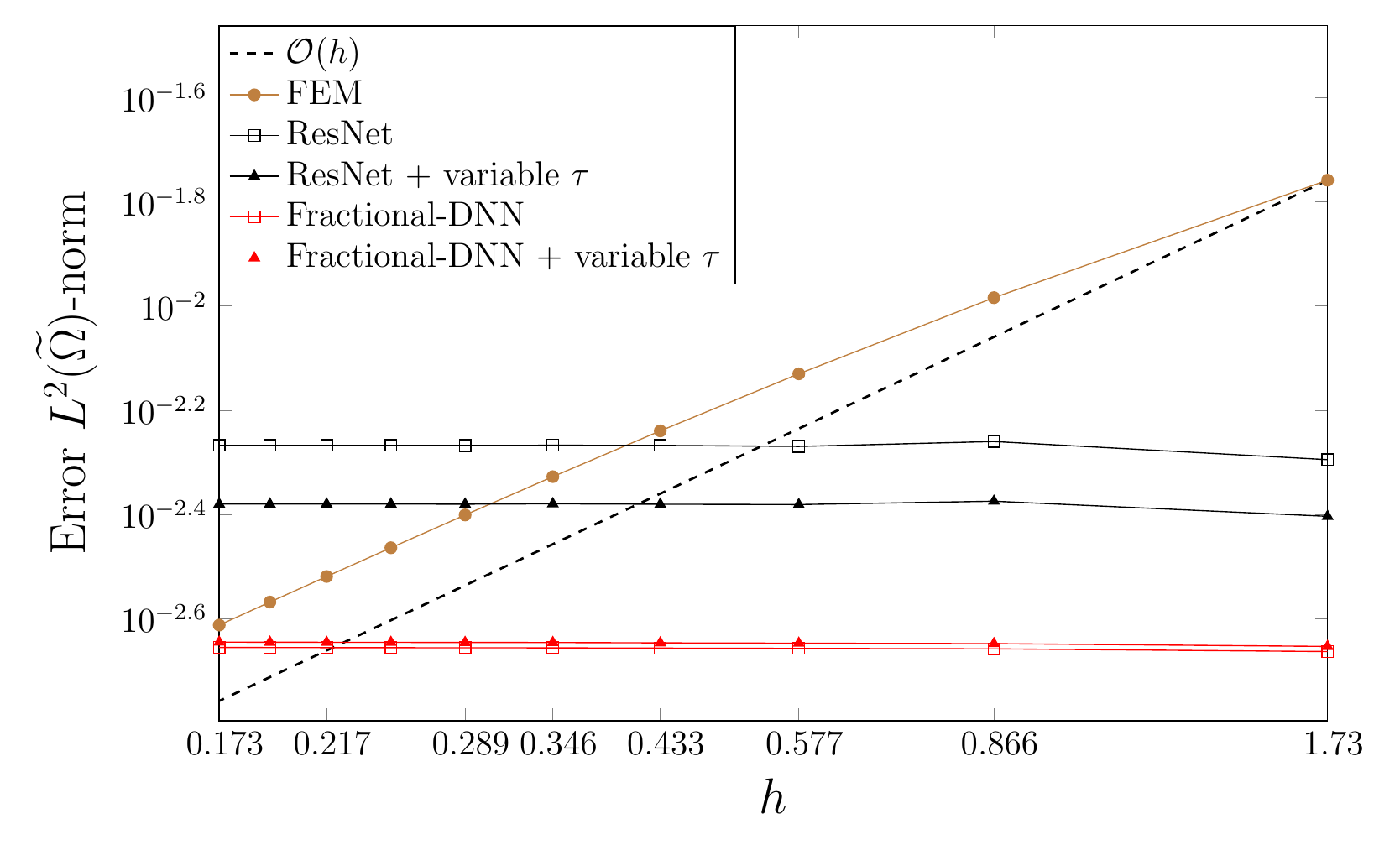} }
     \caption{Comparison between various DNN architectures and FEM. \textbf{Top row:} $L^2$-error between an exact solution and DNN approximation (6 hidden layers with a width of 50 each) or FEM approximation. B.O. indicates bias ordering. The left and right panels correspond to fixed and variable $\tau$, respectively. \textbf{Bottom row:} The left panel shows the mean squared error during training of different DNNs with 2 hidden layers with a width of 50 nodes each. The right panel displays the $L^2$-error between an exact solution and DNN approximation for the same DNNs and FEM.}
     \label{fig:SumFEM}
\end{figure}

\begin{table}[h]
	\begin{tabular}{| l | c | c | c | c | c | c || l | c | c |}
		\hline
		\textbf{6-50 }& $\tau^{[0]}$ & $\tau^{[1]}$ & $\tau^{[2]}$ & $\tau^{[3]}$ & $\tau^{[4]}$ & $\tau^{[5]}$ & \textbf{2-50} & $\tau^{[0]}$ & $\tau^{[1]}$  \\
		\hline
		ResNet & 0.92 & 0.95 & 0.99 & 0.95 & 0.92 & 0.88 & & 0.84 & 0.87 \\
		\hline
		ResNet + B.O. & 0.70 & 0.94 & 1.00 & 0.96 & 0.86 & 0.70 & & 0.28 & 0.51 \\
		\hline
		Fractional-DNN & 0.59 & 0.70 & 0.53 & 0.34 & 0.28 & 0.30 & & 0.94 & 0.93 \\
		\hline
		Fractional-DNN + B.O. & 0.67 & 0.80 & 0.78 & 0.68 & 0.44 & 0.04 & & 0.85 & 0.95 \\
		\hline
	\end{tabular}
	\caption{Optimal learned $\tau$ variables for various DNN architectures with $\tau$-variable framework with 6 layers and 2 layers. These are the same network architectures that are considered in Figure~\ref{fig:SumFEM}.}
	\label{tab:taus}
\end{table}

We compare the neural network results with an approximation obtained with the FEM, see Figure~\ref{fig:a1},~\ref{fig:b1}, and \ref{fig:d1}. To do that, we consider the unit 
cube $(0,1)^3$, denoted by $\widetilde\Omega$ 
as a domain. The unit cube is considered, to test how well the Neural 
Network performs for 
unseen data, and to test its extrapolation properties. Recall that the training data
has been generated on $\Omega$, which is cylindrical.
For the FEM, we consider 10 uniform refinements of the unit cube $(0,1)^3$ and denote by $h$ the mesh size of each one. Then, we compute  $\|\bm u- \bm u^{N\!N}  \|_{\widetilde\Omega}$ and $\|\bm u- \bm u_h \|_{\widetilde\Omega}$, where $\|\cdot \|_{\widetilde\Omega}$ denotes the $L^2(\widetilde\Omega)^3-$norm and $\bm u_h$ denotes the best approximation of $\bm u$ into $ \mathcal{N}_0(\widetilde\Omega)$, with respect to the $H(\mathrm{curl};\widetilde\Omega)-$norm.

In Figure~\ref{fig:d1}, we observe that, for the DNN with 2 layers and for large $h$, the DNN approach gives a better approximation than the Lowest Order N\'ed\'elec space. We further notice that, as $h$ gets smaller, $\bm u^{N\!N}$ needs to be evaluated at more points in $\widetilde\Omega \setminus \Omega$ and it is not obvious if the DNN approximation will remain stable. However, Figure~\ref{fig:a1},~\ref{fig:b1}, and \ref{fig:d1} show that the DNN approximation remains stable.

\begin{figure}[h!]
\centering     
\subfigure[ResNet]{\label{fig:Rf}\includegraphics[scale=0.32]{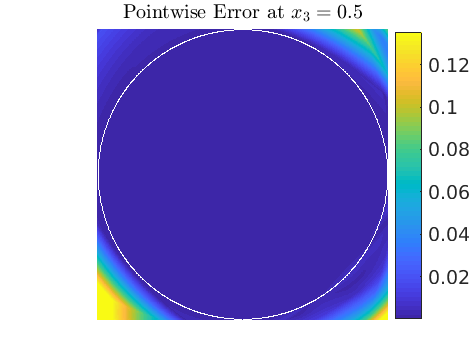}}
\subfigure[ResNet + $\tau$]{\label{fig:Rt}\includegraphics[scale=0.32]{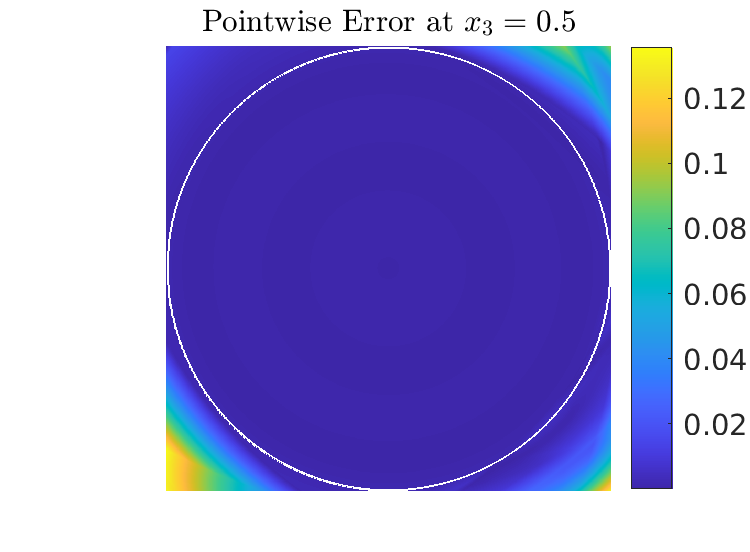}}
\subfigure[Fractional-DNN]{\label{fig:Ff}\includegraphics[scale=0.32]{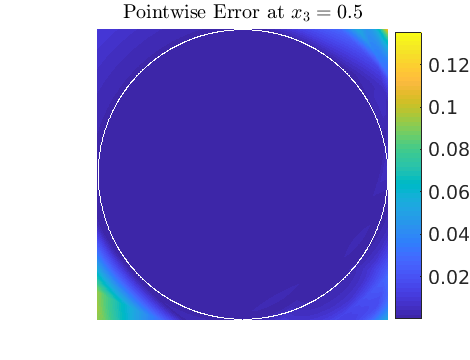}}
\subfigure[Fractional-DNN + $\tau$]{\label{fig:Ft}\includegraphics[scale=0.32]{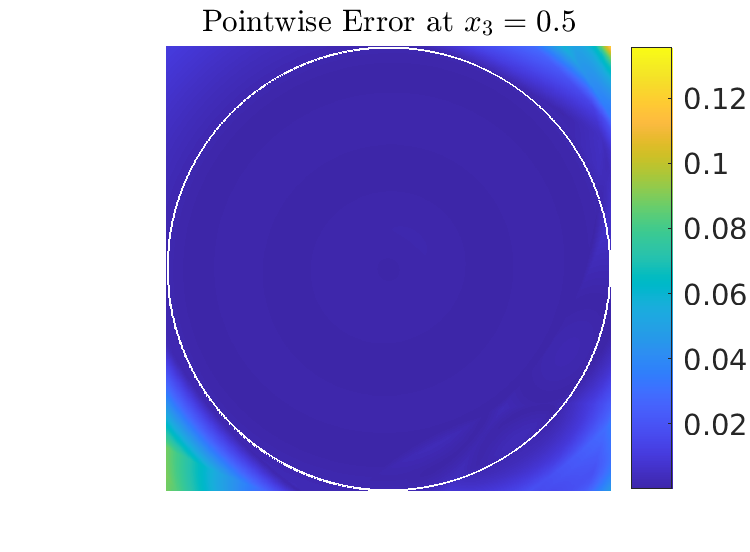}}

\caption{Comparison of testing errors between ResNet, ResNet with $\tau$-learning framework, Fractional-DNN and Fractional-DNN with $\tau$-learning framework (2-50). 
}
\label{fig:PWError} 
\end{figure}

\begin{figure}[h!]
 \centering
   \begin{tikzpicture}
     \node at (-7, 1) {\includegraphics[scale=0.43]{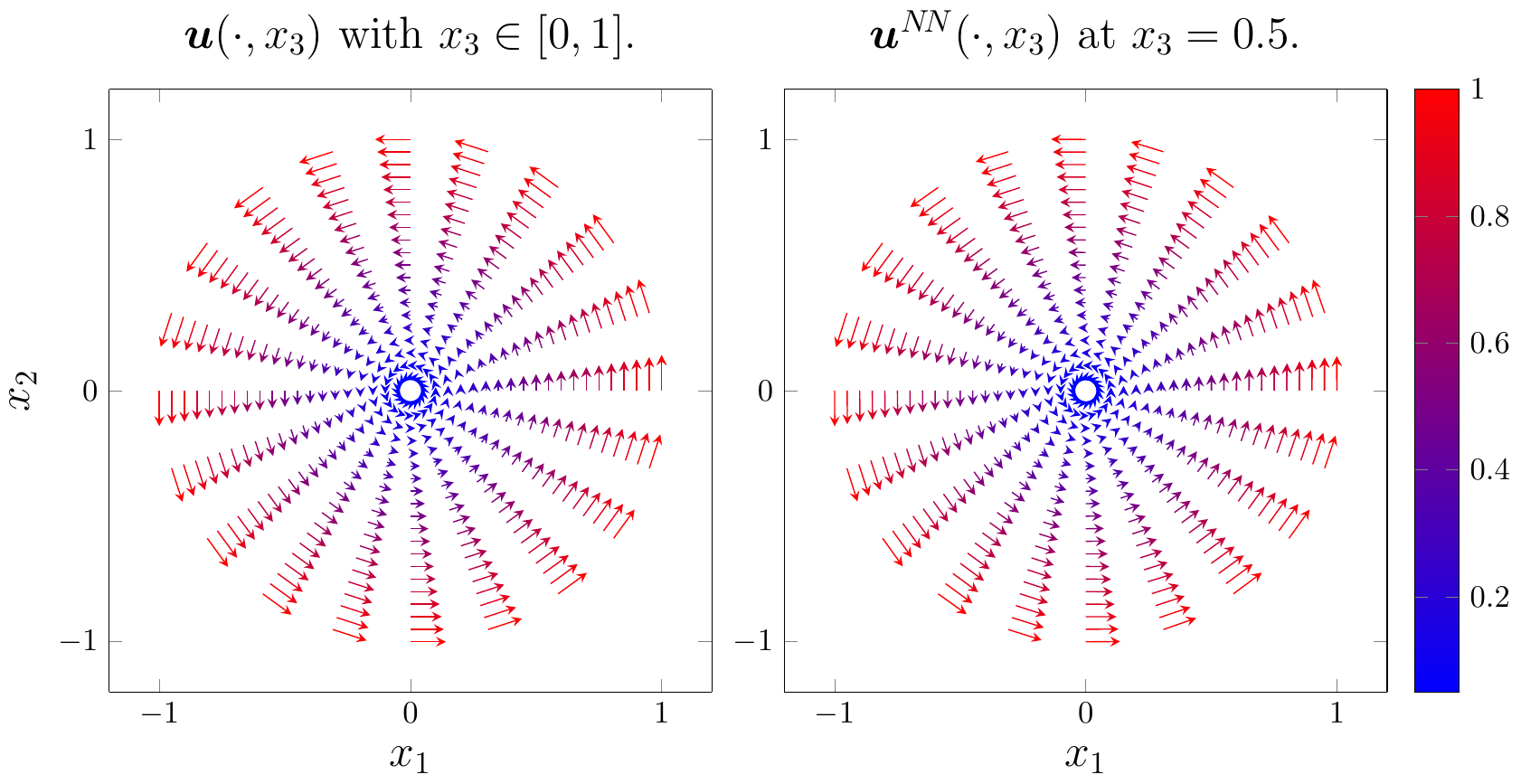}};
     \node at (-0.7, 1) {\includegraphics[scale=0.33]{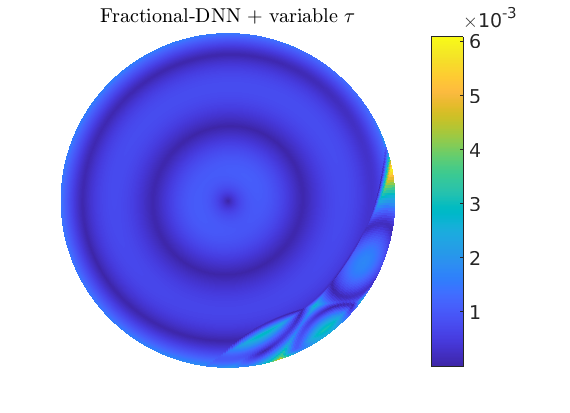}};
          \node at (3.8, 1) {\includegraphics[scale=0.33]{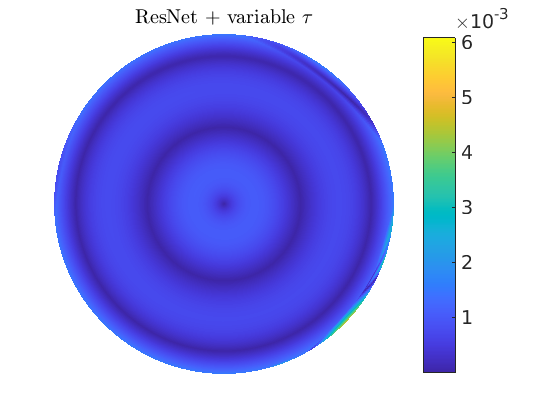}};
  \end{tikzpicture}
\caption{  $\bm u,$ $\bm u^{N\!N}$ and pointwise error  on $\Omega$, at $x_3 = 0.5$  ($x_1x_2$-plane).}\label{fig:QuiverPlot}
\end{figure}

There exist several ways to measure how well a neural network performs. For instance, for the case of 2 layers and 50 nodes, the training error is slightly better with the ResNet-based architectures, cf. Figure \ref{fig:c1}, while a smaller error on unseen data is achieved with Fractional-DNNs,  cf. Figure \ref{fig:d1}. Furthermore, when we plot the error on the  square $(-1,1)^2\times \{0.5\}$, we see that Fractional-DNNs, cf. Figure \ref{fig:Ff} and \ref{fig:Ft}, extrapolate better than ResNets, cf. Figure \ref{fig:Rf} and \ref{fig:Rt}. In the ResNet setting, employing the $\tau$-learning framework (Figure \ref{fig:Rt}) yields to slightly better results than fixing $\tau$ (Figure \ref{fig:Rf}). 
Additionally, from its definition, we know $ \bm u_3\equiv 0,$  cf. \eqref{def:exact_u}. Hence, we present quiver plots of the first two components of $\bm u(\cdot,x_3)$ for any $x_3$, and $\bm u^{N\!N}(\cdot,0.5)$, in Figure~\ref{fig:QuiverPlot}. The two plots seem to coincide. Therefore, we further present pointwise errors restricted to $\Omega$, where the pointwise error is measured in the ($\R^2$) Euclidean norm. For	Fractional-DNN with variable $\tau$ (2-50) with no bias ordering, inside $\Omega$, we observe that 
$ -0.0051\leq \bm u^{N\!N}_{3}(\bm x)\leq 0.0077$, i.e., the constraints violation
is of the order of approximation error. Meanwhile, ResNet with variable $\tau$ achieves better results in this test case.

Besides improving the training, the approximation
could be further improved if we know a priori some qualitative properties of the exact 
solution. Then, they could be forced in the loss functional, similarly, as it is done with 
PINNs, cf. \cite{PINNs21}. It is important to mention that several other numerical 
examples were considered to test the robustness of our $\tau$-variable framework. 
For instance, we considered problems where the standard Neural ODEs 
(cf.~\cite{neuralOdes}) struggle to obtain good approximations, as pointed out in 
\cite{dupont2019}. We obtained similar results to the ones presented here. For the
sake of brevity, those results have been excluded.

\section*{Conclusion} \label{sec:Concl}
A time variable learning framework for DNNs has been introduced, which in general can be applied to any DNN. However, from a mathematical perspective, DNN architectures which can be related to dynamical systems are of interest, since learning $\tau$ then corresponds to optimal adaptive time stepping. Consequently, special emphasis has been put on applying the $\tau$-variable framework to ResNet and Fractional-DNN. The $\tau$-variable framework is argued to overcome vanishing and exploding gradient challenges. 
The numerical results suggest that DNNs with $\tau$-variable framework outperform their counterparts with fixed $\tau$ for deep architectures and enjoy an improved training error decay. Moreover, this method has the potential of identifying redundant layers, so that the network size can be reduced while maintaining the quality of the prediction.

\bibliographystyle{plain}
\bibliography{NR}

\appendix

\section{Derivatives of the Lagrangian $\mathcal{L}$} \label{appA}
We provide detailed calculations of derivatives needed in Section \ref{subsec:learningfrac}. To this end we recall
\begin{align*}
	\mathcal{L}(y,\theta,\phi) &=
	J (\theta)
	- \sum_{\ell=1}^{L-1} \langle y^{[\ell]} - P_{\ell-1}^{\ell}y^{[\ell-1]} + \sum_{j=0}^{\ell-2} a_{\ell-1,j} (P_{j+1}^{\ell}y^{[j+1]}- P_{j}^{\ell}y^{[j]}) \\
	&\qquad \qquad  - (\tau^{[\ell-1]})^\gamma 
	c_{\gamma-1}^{-1} 
	 \sigma ( W^{[\ell-1]} y^{[\ell-1]} + b^{[\ell-1]} )  , \phi^{[\ell]} \rangle \\ 
	&\qquad - \left\langle y^{[L]} - W^{[L-1]} y^{[L-1]} , \phi^{[L]} \right\rangle
\end{align*}

\subsection{Derivative with respect to $y^{[\ell]}$}
\label{appA1}
Here, we calculate the variation of $\mathcal{L}$ with respect to $y^{[\ell]}$ for $\ell=1,\ldots,L$:
\begin{align*}
\partial_{y^{[\ell]}}	\mathcal{L}(y,\theta,\phi) &= \partial_{y^{[\ell]}} J(\theta) - \partial_{y^{[\ell]}} \left( \sum_{k=1}^{L-1}  \langle y^{[k]} - P_{k-1}^{k}y^{[k-1]},  \phi^{[k]} \rangle \right)\\
&\quad
 - \partial_{y^{[\ell]}} \left(  \sum_{k=1}^{L-1} \left\langle \sum_{j=0}^{k-2} a_{k-1,j}(P_{j+1}^{k}y^{[j+1]} -P_{j}^{k}y^{[j]}), \phi^{[k]}  \right\rangle \right)\\
 &\quad 
 + \partial_{y^{[\ell]}} \left( \sum_{k=1}^{L-1} \left\langle(\tau^{[k-1]})^\gamma 
c_{\gamma-1}^{-1} 
 \sigma ( W^{[k-1]} y^{[k-1]} + b^{[k-1]} )  , \phi^{[k]} \right\rangle \right) \\
 &\quad 
 - \partial_{y^{[\ell]}} \left\langle y^{[L]} - W^{[L-1]} y^{[L-1]} , \phi^{[L]} \right\rangle.
\end{align*}
From e.g. \cite[Section 4.2.]{antil2020fractional} we have most components of this expression already given. The main difference lies in the factors $a_{k-1,j}$, since the contained $\tau^{[j]},\ldots,\tau^{[k-1]}$ are variable now. We only calculate the remaining unknown term, i.e. the second line in the above equation. First we rewrite the double sum in the following way: 
\begin{align*}
	&-   \sum_{k=1}^{L-1} \left\langle \sum_{j=0}^{k-2} a_{k-1,j}(P_{j+1}^{k}y^{[j+1]} -P_{j}^{k}y^{[j]}), \phi^{[k]}  \right\rangle \\
	&\quad =  \sum_{k=0}^{L-2} \left( \sum_{j=0}^{k-1} a_{k,j} \langle P_{j}^{k+1}y^{[j]}, \phi^{[k+1]}\rangle - \sum_{j=0}^{k-1} a_{k,j} \langle P_{j+1}^{k+1}y^{[j+1]}, \phi^{[k+1]}\rangle \right) \\
	&\quad = \sum_{k=0}^{L-2} \left( \sum_{j=0}^{k-1} a_{k,j} \langle P_{j}^{k+1}y^{[j]}, \phi^{[k+1]}\rangle - \sum_{j=1}^{k} a_{k,j-1} \langle P_{j}^{k+1}y^{[j]}, \phi^{[k+1]}\rangle \right).
\end{align*}
Now we take the derivative with respect to $y^{[\ell]}$ and can apply the sum rule of differentiation:
\begin{align*}
 &\sum_{k=0}^{L-2} \sum_{j=0}^{k-1} a_{k,j} \, \partial_{y^{[\ell]}}   \langle y^{[j]}, (P_{j}^{k+1})^{\top}\phi^{[k+1]}\rangle - \sum_{k=0}^{L-2} \sum_{j=1}^{k} a_{k,j-1}  \, \partial_{y^{[\ell]}} \langle y^{[j]}, (P_{j}^{k+1})^{\top} \phi^{[k+1]}\rangle \\
	&\quad =  \sum_{k=0, \ell \le k-1}^{L-2}  a_{k,\ell} (P_{\ell}^{k+1})^{\top} \phi^{[k+1]}- \sum_{k=0, \ell \le k}^{L-2}a_{k,\ell-1} (P_{\ell}^{k+1})^{\top}\phi^{[k+1]} \\
	&\quad = \sum_{k = \ell + 1}^{L-2} a_{k,\ell} (P_{\ell}^{k+1})^{\top}\phi^{[k+1]} - \sum_{k= \ell}^{L-2} a_{k,\ell-1} (P_{\ell}^{k+1})^{\top}\phi^{[k+1]} \\
\end{align*}
For $\ell = L$ and $\ell = L-1$ this derivative vanishes. For $\ell = 1,\ldots,L-2$ we can slightly rewrite to get  
\begin{align*}
	&\sum_{k = \ell + 1}^{L-2} a_{k,\ell} (P_{\ell}^{k+1})^{\top}\phi^{[k+1]} - \sum_{k= \ell}^{L-2} a_{k,\ell-1} (P_{\ell}^{k+1})^{\top}\phi^{[k+1]}  \\
	&\quad = \sum_{k = \ell + 2}^{L-1} (a_{k-1,\ell}-a_{k-1,\ell-1} ) (P_{\ell}^{k})^{\top}\phi^{[k]} - a_{\ell,\ell-1} (P_{\ell}^{\ell+1})^{\top}\phi^{[\ell +1]}.
\end{align*}
Now, the derivative can be assembled.

\subsection{Derivative with respect to $\tau^{[\ell]}$}
\label{appA2}

Care must be observed as $a_{k,j}$ contains $\tau^{[j]},\ldots,\tau^{[k]}$. 
The derivative of $\mathcal{L}(y,\theta,\phi)$ with respect to $\tau^{[\ell]}$ for $\ell=0,\ldots,L-2$ therefore consists of the following terms: 
\begin{align*}
	\partial_{\tau^{[\ell]}} \mathcal{L}(y,\theta,\phi) &= \partial_{\tau^{[\ell]}}
	J (\theta)
	- \partial_{\tau^{[\ell]}} \left( \sum_{k=1}^{L-1} \left\langle  \sum_{j=0}^{k-2} a_{k-1,j} (P_{j+1}^{k}y^{[j+1]}- P_{j}^{k}y^{[j]}), \phi^{[k]} \right\rangle \right) \\
	&\quad  +  \partial_{\tau^{[\ell]}} \left( \sum_{k=1}^{L-1}  \left\langle (\tau^{[k-1]})^\gamma 
	c_{\gamma-1}^{-1} 
	 \sigma ( W^{[k-1]} y^{[k-1]} + b^{[k-1]} )  , \phi^{[k]} \right\rangle \right).
\end{align*}
We make an index shift from $k-1$ to $k$, use the sum rule of differentiation and the fact that only $\tau^{[j]},\ldots,\tau^{[k]}$ are contained in $a_{k,j}$ to obtain 
\begin{align*}
	&\partial_{\tau^{[\ell]}} \left( -\sum_{k=1}^{L-1} \left\langle \sum_{j=0}^{k-2} a_{k-1,j} ({P_{j+1}^{k}}y^{[j+1]} - {P_{j}^{k}}y^{[j]}), \phi^{[k]} \right\rangle \right) \\
	&\quad = -\sum_{k=0}^{L-2}  \sum_{j=0}^{k-1} \partial_{\tau^{[\ell]}} (a_{k,j}) \left\langle {P_{j+1}^{k+1}}y^{[j+1]} - {P_{j}^{k+1}}y^{[j]}, \phi^{[k+1]} \right\rangle \\
	&\quad =-\sum_{k= \ell}^{L-2}  \sum_{j=0 }^{\min\{k-1,\ell\}} \partial_{\tau^{[\ell]}} (a_{k,j}) \left\langle {P_{j+1}^{k+1}}y^{[j+1]} - {P_{j}^{k+1}}y^{[j]}, \phi^{[k+1]} \right\rangle.
\end{align*}
Using the above equality, we arrive at
\begin{align*}
	\partial_{\tau^{[\ell]}} \mathcal{L}(y,\theta,\phi)   &= \partial_{\tau^{[\ell]}} J(\theta)   -\sum_{k= \ell}^{L-2}  \sum_{j=0 }^{\min\{k-1,\ell\}} \partial_{\tau^{[\ell]}} (a_{k,j}) \left\langle {P_{j+1}^{k+1}}y^{[j+1]} - {P_{j}^{k+1}}y^{[j]}, \phi^{[k+1]} \right\rangle  \\ 
&\quad+ \left\langle  \gamma (\tau^{[\ell]})^{\gamma-1}
c_{\gamma-1}^{-1}
 \sigma (W^{[\ell]} y^{[\ell]} + b^{[\ell]}), \phi^{[\ell+1]} \right\rangle.
\end{align*}
For implementations we may want to understand the double summation in more detail. In fact, it can be split up into 3 terms in the following way:
\begin{align*}
	&-\sum_{k= \ell}^{L-2}  \sum_{j=0 }^{\min\{k-1,\ell\}} \partial_{\tau^{[\ell]}} (a_{k,j}) \left\langle {P_{j+1}^{k+1}}y^{[j+1]} - {P_{j}^{k+1}}y^{[j]}, \phi^{[k+1]} \right\rangle  \\
	&\quad = -  \sum_{j=0 }^{\ell-1} \partial_{\tau^{[\ell]}} (a_{\ell,j}) \left\langle {P_{j+1}^{\ell+1}}y^{[j+1]} - {P_{j}^{\ell+1}}y^{[j]}, \phi^{[\ell+1]} \right\rangle \\
	&\qquad - \sum_{k= \ell+1}^{L-2}  \sum_{j=0 }^{\ell-1} \partial_{\tau^{[\ell]}} (a_{k,j}) \left\langle {P_{j+1}^{k+1}}y^{[j+1]} - {P_{j}^{k+1}}y^{[j]}, \phi^{[k+1]} \right\rangle \\ 
	&\qquad - \sum_{k= \ell+1}^{L-2} \partial_{\tau^{[\ell]}} (a_{k,\ell}) \left\langle {P_{\ell+1}^{k+1}}y^{[\ell+1]} - {P_{\ell}^{k+1}}y^{[\ell]}, \phi^{[k+1]} \right\rangle.
\end{align*}
Now for each of the above terms we can easily compute the contained derivative using basic differentiation rules.
Employing the product rule for $j=0,\ldots,\ell-1$, i.e. $j<\ell$, we get 
\begin{align*}
	 \partial_{\tau^{[\ell]}} (a_{\ell,j}) &= \textstyle (1-\gamma) \frac{(\tau^{[\ell]})^{\gamma}}{\tau^{[j]}} \left( \left( \sum_{i=j}^\ell \tau^{[i]} \right)^{-\gamma} - \left( \sum_{i=j+1}^\ell \tau^{[i]} \right)^{-\gamma}  \right) \\
	 &\quad + \textstyle\gamma \frac{(\tau^{[\ell]})^{\gamma-1}}{\tau^{[j]}} \left( \left( \sum_{i=j}^\ell \tau^{[i]} \right)^{1-\gamma} - \left( \sum_{i=j+1}^\ell \tau^{[i]} \right)^{1-\gamma}  \right) .
\end{align*}
For $j=0,\ldots,\ell-1$ and $k = \ell+1,\ldots,L-2$, i.e. $j < \ell < k$, we have 
\begin{align*}
	\partial_{\tau^{[\ell]}} (a_{k,j})  = \textstyle (1-\gamma) \frac{(\tau^{[k]})^{\gamma}}{\tau^{[j]}} \left( \left( \sum_{i=j}^k \tau^{[i]} \right)^{-\gamma} - \left( \sum_{i=j+1}^k \tau^{[i]} \right)^{-\gamma}  \right) .
\end{align*}
And finally, using the product rule again for $k = \ell+1,\ldots,L-2$, i.e. $\ell <k$, we see 
\begin{align*}
	 \partial_{\tau^{[\ell]}} (a_{k,\ell}) &= \textstyle \frac{(\tau^{[k]})^{\gamma}}{(\tau^{[\ell]})^2} \left(  \left( \sum_{i=\ell+1}^k \tau^{[i]} \right)^{1-\gamma} - \left( \sum_{i=\ell}^k \tau^{[i]} \right)^{1-\gamma}\right) + (1-\gamma) \frac{(\tau^{[k]})^{\gamma}}{\tau^{[\ell]}}  \left( \sum_{i=\ell}^k \tau^{[i]} \right)^{-\gamma}
\end{align*}

\end{document}